\documentclass[11pt]{amsart}

\usepackage[margin=1.15in]{geometry}
\usepackage{amsmath,amssymb,amsthm,amscd,mathtools}
\usepackage[hidelinks]{hyperref}

\theoremstyle{plain}
\newtheorem{theorem}{Theorem}[section]
\newtheorem{proposition}[theorem]{Proposition}
\newtheorem{lemma}[theorem]{Lemma}
\newtheorem{corollary}[theorem]{Corollary}
\theoremstyle{definition}
\newtheorem{definition}[theorem]{Definition}
\newtheorem{example}[theorem]{Example}
\theoremstyle{remark}
\newtheorem{remark}[theorem]{Remark}

\newcommand{\Q}{\mathbb{Q}}
\newcommand{\C}{\mathbb{C}}
\newcommand{\PP}{\mathbb{P}}
\newcommand{\Conf}{\operatorname{Conf}}
\newcommand{\Gr}{\operatorname{Gr}}
\newcommand{\Aut}{\operatorname{Aut}}
\newcommand{\Graphs}{\mathbf{Graph}}
\newcommand{\GrAlg}{\mathbf{GrAlg}_{\Q}}
\newcommand{\qbinom}[2]{\genfrac{[}{]}{0pt}{}{#1}{#2}}

\title{Edge-Span Chern Algebras of Graphical Configuration Spaces}
\author{Jackson Walters}
\address{Northern Virginia Community College, Virginia, USA}
\email{jwalters@nvcc.edu}
\subjclass[2020]{Primary 05C25; Secondary 05C60, 13A02, 55R80}
\keywords{graphical configuration spaces, Chern classes, graph invariants,
graded algebras, graph functors, graph picture spaces, tree reconstruction}
\date{July 21, 2026}
\hypersetup{
  pdftitle={Edge-Span Chern Algebras of Graphical Configuration Spaces},
  pdfauthor={Jackson Walters}
}

\begin{document}

\begin{abstract}
Place the vertices of a finite graph at projective points.  Each edge defines a
span map to $\Gr(2,n)$; the pulled-back Chern classes generate a graded algebra
$A_G^{(n)}$.  This assignment is a covariant graph functor, so the abstract
graded-algebra type is a graph invariant.  In ambient dimension four, the
Hilbert series is incomparable with the chromatic and Tutte polynomials.  On
five vertices in dimension three, the $34$ graph classes yield $33$ algebra
types, strictly refining both classical polynomials.  For every tree $T$, we
obtain a closed Hilbert-series formula depending only on $|V(T)|$ and $n$,
while $A_T^{(3)}$ determines $T$ up to isomorphism.  More precisely, its
cubic relation data is a complete tree invariant of polynomial size.  Finally,
the graphical configuration space embeds as a dense open in the picture
variety, and picture spaces with equal additive homology can have
nonisomorphic edge-Chern algebras.
\end{abstract}

\maketitle

\section{Introduction}

Graphical configuration spaces encode a graph topologically by forbidding
collisions precisely along its edges.  When the ambient space is projective,
they carry additional geometry: each edge determines a rank-two bundle given
by the span of its endpoints.  We ask whether the Chern classes of these
edge-span bundles assemble functorially, and how much graph structure their
multiplication retains.

The paper follows this question from construction to computation and then to
reconstruction.  The edge-span classes define a covariant functor
$G\mapsto A_G^{(n)}$ (Theorem~\ref{thm:main-functor}).  A graphical
K\v{r}{\'\i}\v{z}--Totaro model and its polynomial sector provide exact access to
these algebras (Theorem~\ref{thm:graphical-kriz-model} and
Proposition~\ref{prop:polynomial-sector}).  This leads to a census in which the
full algebra distinguishes $33$ of the $34$ graph classes on five vertices in
ambient dimension three (Theorem~\ref{thm:five-vertex-classification}), while
the Hilbert-series shadow is incomparable with the chromatic and Tutte
polynomials (Proposition~\ref{prop:classical-incomparability}).

Trees turn this computational evidence into a structural statement.  Their
Hilbert series depends only on the number of vertices and the ambient
dimension (Theorem~\ref{thm:tree-a-hilbert}), but $A_T^{(3)}$ determines $T$
(Theorem~\ref{thm:tree-reconstruction}).  In fact, the cubic relation pair
$(V_T,K_T)$ is already complete and admits a polynomial-size construction
(Corollary~\ref{cor:tree-cubic-invariant} and
Proposition~\ref{prop:tree-cubic-polynomial-size}).  This is a structural
result rather than a proposed tree-isomorphism algorithm: multiplication
retains the entire tree even though the Hilbert series forgets its shape.
Finally, the same span bundles extend across the collision boundary to graph
picture varieties (Proposition~\ref{prop:picture-compactification}).  The
constituent ideas have substantial precedents, compared in
Section~\ref{sec:related-constructions}; the contribution here is the
edge-span Chern subalgebra, its graph-functor structure, and the information
carried by its multiplication.

Let $G$ be a finite simple graph with vertex set $V(G)$ of size $r$.  Fix an
ambient vector space $\C^n$.  A projective point
\[
  \ell\in \PP^{n-1}(\C)
\]
is a one-dimensional subspace of $\C^n$, so it is natural to use projective
points as geometric vertices.  Two distinct projective points determine a
two-dimensional subspace,
\[
  (\ell_u,\ell_v)\longmapsto \langle \ell_u,\ell_v\rangle
  \in \Gr(2,n),
\]
which gives a geometric realization of the edge $uv$.

A first version of the construction places all $r$ vertices in the ordinary
ordered configuration space
\[
  \Conf_r(\PP^{n-1})
  =\{(\ell_1,\ldots,\ell_r):\ell_i\neq \ell_j\text{ for }i\neq j\}.
\]
Every pair then has a span, and a graph merely selects some of the resulting
span maps.  This is useful for a fixed labeled vertex set, but it is not
functorial for arbitrary graph homomorphisms: a homomorphism may identify two
nonadjacent vertices, while ordinary configuration spaces prohibit every
collision.

To allow such identifications, we forbid a collision only when the
corresponding pair is an edge.  The ambient configuration space then depends
on the graph.

\section{Graphical configuration spaces}

Throughout, $\Graphs$ denotes the category of finite simple loopless graphs and
graph homomorphisms.

\begin{definition}
Let $X$ be a Hausdorff space and let $G$ be a finite graph.  The \emph{graphical
configuration space} of $G$ in $X$ is
\[
  \Conf_G(X)
  =\left\{(x_v)_{v\in V(G)}\in X^{V(G)}:
    x_u\neq x_v\text{ whenever }uv\in E(G)\right\}.
\]
Equivalently,
\[
  \Conf_G(X)
  =X^{V(G)}\setminus\bigcup_{uv\in E(G)}\Delta_{uv},
  \qquad
  \Delta_{uv}=\{x_u=x_v\}.
\]
\end{definition}

Nonadjacent vertices are allowed to collide.  Thus the two extreme cases are
\[
  \Conf_{\overline K_r}(X)=X^r,
  \qquad
  \Conf_{K_r}(X)=\Conf_r(X).
\]

\begin{proposition}\label{prop:conf-functor}
For fixed $X$, graphical configuration space is a contravariant functor
\[
  \Conf_{(-)}(X):\Graphs^{\mathrm{op}}\longrightarrow \mathbf{Top}.
\]
\end{proposition}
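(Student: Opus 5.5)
The plan is to define the functor on morphisms by precomposition and then check well-definedness, continuity, and the functor axioms. Given a graph homomorphism $f\colon G\to H$, i.e.\ a map $f\colon V(G)\to V(H)$ with $f(u)f(v)\in E(H)$ whenever $uv\in E(G)$, set
\[
  f^*\colon \Conf_H(X)\longrightarrow \Conf_G(X),\qquad
  (y_w)_{w\in V(H)}\longmapsto (y_{f(v)})_{v\in V(G)}.
\]
This is the restriction of the map $X^{V(H)}\to X^{V(G)}$ induced by precomposition with $f$.

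First, I will check that $f^*$ lands in $\Conf_G(X)$. Take $(y_w)\in\Conf_H(X)$ and an edge $uv\in E(G)$. Since $f$ is a homomorphism, $f(u)f(v)\in E(H)$. In particular $f(u)\neq f(v)$, because graphs in $\Graphs$ are loopless. The defining condition of $\Conf_H(X)$ then gives $y_{f(u)}\neq y_{f(v)}$, which is exactly the condition required at the edge $uv$.

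This is the only step with real content. It is where looplessness and the edge-preservation property of homomorphisms are used. It also explains why nonadjacent vertices must be allowed to collide: a homomorphism may send nonadjacent $u,v$ to the same vertex, and then $y_{f(u)}=y_{f(v)}$.

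Second, I will check continuity. The map $X^{V(H)}\to X^{V(G)}$ is continuous, because each coordinate $(y_w)\mapsto y_{f(v)}$ is a projection. Hence its restriction to the subspace $\Conf_H(X)$, with corestriction to the subspace $\Conf_G(X)$, is continuous in the subspace topologies. The Hausdorff hypothesis is not needed here; it only guarantees that the configuration spaces are open subspaces.

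Finally, I will verify the functor axioms directly from the formula. For the identity, $(\mathrm{id}_G)^*$ is the identity. For composable homomorphisms $G\xrightarrow{f}H\xrightarrow{g}K$, both $(g\circ f)^*$ and $f^*\circ g^*$ send $(z_x)$ to $(z_{g(f(v))})_v$, so $(g\circ f)^*=f^*\circ g^*$. This establishes contravariance.
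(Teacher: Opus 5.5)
Your proposal is correct and follows the paper's proof essentially verbatim: define the map by precomposition $(y_w)\mapsto(y_{f(v)})$, use edge preservation to see that it lands in $\Conf_G(X)$, and read off the identity and composition laws from the formula. Your explicit continuity check is a small addition that the paper leaves implicit. The appeal to looplessness is harmless but unnecessary, since $f(u)f(v)\in E(H)$ already gives $y_{f(u)}\neq y_{f(v)}$ directly.
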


\begin{proof}
Let $f:F\to G$ be a graph homomorphism.  Define
\[
  f_X:\Conf_G(X)\longrightarrow\Conf_F(X),
  \qquad
  (x_w)_{w\in V(G)}\longmapsto (x_{f(v)})_{v\in V(F)}.
\]
If $uv\in E(F)$, then $f(u)f(v)\in E(G)$, so
$x_{f(u)}\neq x_{f(v)}$.  Hence $f_X$ lands in $\Conf_F(X)$.  Directly from
the definition,
\[
  (g\circ f)_X=f_X\circ g_X
\]
and identity homomorphisms induce identity maps.
\end{proof}

The contravariance has a simple meaning: a $G$-configuration may be pulled
back along a vertex map $f:F\to G$ to produce an $F$-configuration.

\subsection{Colorings}

If $X$ is a finite set of cardinality $q$, then $\Conf_G(X)$ is exactly the set
of proper $X$-colorings of $G$.  Consequently,
\[
  |\Conf_G(X)|=P_G(q),
\]
where $P_G$ is the chromatic polynomial.  The same statement has a topological
shadow.  For a sufficiently well-behaved space $X$, additivity and
multiplicativity of compactly supported Euler characteristic give
\begin{equation}\label{eq:euler-chromatic}
  \chi_c(\Conf_G(X))=P_G(\chi_c(X)).
\end{equation}
In particular,
\[
  \chi_c\!\left(\Conf_G(\PP^{n-1}(\C))\right)=P_G(n).
\]

This is the Euler-characteristic shadow of deletion--contraction: for an edge
$e=uv$, the complement of $\Conf_G(X)$ in $\Conf_{G\setminus e}(X)$ is the
diagonal piece naturally identified with $\Conf_{G/e}(X)$; compare
Section~\ref{subsec:deletion-contraction}.  Thus graphical configuration
spaces are a topological refinement of graph coloring.

\section{Projective span maps}

We now specialize to
\[
  X_n=\PP^{n-1}(\C),\qquad n\geq 2.
\]
For every edge $e=uv\in E(G)$, the defining condition of $\Conf_G(X_n)$
ensures that $\ell_u$ and $\ell_v$ are distinct.  Their span is therefore
two-dimensional, and we obtain a regular map
\[
  \phi_e^G:\Conf_G(X_n)\longrightarrow\Gr(2,n),
  \qquad
  (\ell_w)_{w\in V(G)}\longmapsto\langle\ell_u,\ell_v\rangle.
\]

Let $\mathcal S$ denote the tautological rank-two subbundle on $\Gr(2,n)$.
For each vertex $v$, let $\mathcal L_v$ be the pullback to
$\Conf_G(X_n)$ of the tautological line bundle $\mathcal O(-1)$ from the
$v$th projective factor.  There is a canonical splitting
\[
  (\phi_e^G)^*\mathcal S\cong\mathcal L_u\oplus\mathcal L_v.
\]
If
\[
  h_v=c_1(\mathcal L_v^*)\in H^2(\Conf_G(X_n);\Q),
\]
then the two pulled-back Chern classes are
\begin{align*}
  a_e^G
  &=(\phi_e^G)^*c_1(\mathcal S^*)=h_u+h_v,\\
  b_e^G
  &=(\phi_e^G)^*c_2(\mathcal S^*)=h_uh_v.
\end{align*}
Indeed, dualizing the splitting above gives
\[
  c\bigl((\phi_e^G)^*\mathcal S^*\bigr)
  =c(\mathcal L_u^*)c(\mathcal L_v^*)
  =(1+h_u)(1+h_v),
\]
which also fixes the sign convention for these generators.
They have cohomological degrees $2$ and $4$, respectively.

\begin{definition}\label{def:edge-algebra}
Define
\[
  H_G^{(n)}
  =H^*(\Conf_G(\PP^{n-1}(\C));\Q)
\]
and let the \emph{edge-span algebra} be the unital graded subalgebra
\[
  A_G^{(n)}
  =\Q[a_e^G,b_e^G:e\in E(G)]
  \subseteq H_G^{(n)}.
\]
\end{definition}

The formulas $a_{uv}=h_u+h_v$ and $b_{uv}=h_uh_v$ are elementary, but their
products are taken in a cohomology ring that itself depends on the entire
diagonal arrangement determined by $G$.  The graph enters both through the
chosen edge classes and through the relations in the ambient ring.

\section{The cohomological graph functor}

Cohomology is contravariant.  Composing it with the contravariant construction
of Proposition~\ref{prop:conf-functor} reverses the arrows twice.

\begin{theorem}\label{thm:main-functor}
For every $n\geq2$, the assignment
\[
  G\longmapsto A_G^{(n)}
\]
extends canonically to a covariant functor
\[
  \mathcal A_n:\Graphs\longrightarrow\GrAlg.
\]
More precisely, $G\mapsto (A_G^{(n)}\subseteq H_G^{(n)})$ is a functor to
graded algebras equipped with a distinguished graded subalgebra.
\end{theorem}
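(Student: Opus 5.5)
The plan is to compose the contravariant functor of Proposition~\ref{prop:conf-functor} with rational cohomology, and then check that the induced ring maps carry edge-span subalgebras into edge-span subalgebras. For a graph homomorphism $f:F\to G$, Proposition~\ref{prop:conf-functor} gives a continuous map $f_X:\Conf_G(X_n)\to\Conf_F(X_n)$ with $X_n=\PP^{n-1}(\C)$, hence a graded ring homomorphism
\[
  f^\sharp:=(f_X)^*:H_F^{(n)}\longrightarrow H_G^{(n)} .
\]
This is covariant in $f$, because $(g\circ f)_X=f_X\circ g_X$ and cohomology reverses arrows, so $(g\circ f)^\sharp=g^\sharp\circ f^\sharp$. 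Identities go to identities. This already gives a covariant functor $\Graphs\to\GrAlg$, namely $G\mapsto H_G^{(n)}$.

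The key step is the compatibility $f^\sharp(A_F^{(n)})\subseteq A_G^{(n)}$. I would first check it on the vertex classes. The composite of $f_X$ with the projection to the $v$th factor of $\Conf_F(X_n)$ is the projection to the $f(v)$th factor of $\Conf_G(X_n)$. Therefore $f_X^*\mathcal L_v\cong\mathcal L_{f(v)}$, and so $f^\sharp(h_v)=h_{f(v)}$.

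For an edge $e=uv\in E(F)$, the image $f(e)=f(u)f(v)$ is an edge of $G$, and $f(u)\neq f(v)$ since $G$ is loopless. The cleanest argument is at the level of maps: $\phi_e^F\circ f_X=\phi^G_{f(e)}$, because both send $(\ell_w)$ to $\langle\ell_{f(u)},\ell_{f(v)}\rangle$. Pulling back $c_1(\mathcal S^*)$ and $c_2(\mathcal S^*)$ then gives
\[
  f^\sharp(a_e^F)=a_{f(e)}^G,\qquad f^\sharp(b_e^F)=b_{f(e)}^G .
\]
The same identities also follow directly from $f^\sharp(h_v)=h_{f(v)}$. Since $f^\sharp$ is a unital ring map and sends the generators of $A_F^{(n)}$ to generators of $A_G^{(n)}$, it maps $A_F^{(n)}$ into $A_G^{(n)}$.

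We then define $\mathcal A_n(f)$ to be the restriction of $f^\sharp$ to $A_F^{(n)}\to A_G^{(n)}$. Functoriality of the restrictions is inherited from functoriality of $f^\sharp$, because restriction commutes with composition. This proves the refined statement: the pair $(A_G^{(n)}\subseteq H_G^{(n)})$ is functorial, with morphisms given by ring maps that preserve the distinguished subalgebras. The construction is canonical because it involves no choices beyond cohomology pullback.

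The only point needing care, and the main (mild) obstacle, is the loopless and edge-preservation issue: we must ensure that $\phi^G_{f(e)}$ is defined, which requires $f(u)\neq f(v)$ and $f(u)f(v)\in E(G)$. Both hold because homomorphisms in $\Graphs$ preserve edges and target graphs have no loops. A secondary point is to keep track of the identification $f_X^*\mathcal L_v\cong\mathcal L_{f(v)}$ canonically, so that the signs of the Chern classes match. This is immediate because both bundles are pullbacks of $\mathcal O(-1)$ along equal maps.
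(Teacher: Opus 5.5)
Your proposal is correct and follows essentially the same route as the paper: you compose Proposition~\ref{prop:conf-functor} with rational cohomology, and you use the identity $\phi_e^F\circ f_{X_n}=\phi_{f(e)}^G$ (the paper's commutative square) together with naturality of Chern classes to get $f_{X_n}^*(a_e^F)=a_{f(e)}^G$ and $f_{X_n}^*(b_e^F)=b_{f(e)}^G$, and then you restrict to the subalgebras. Your extra check via $f_{X_n}^*(h_v)=h_{f(v)}$ is a valid but redundant second derivation of the same identities.
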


\begin{proof}
Let $f:F\to G$ be a graph homomorphism.  Proposition~\ref{prop:conf-functor}
gives
\[
  f_{X_n}:\Conf_G(X_n)\longrightarrow\Conf_F(X_n),
\]
and hence a graded algebra map
\[
  f_{X_n}^*:H_F^{(n)}\longrightarrow H_G^{(n)}.
\]
For every edge $e=uv$ of $F$, the following diagram commutes:
\[
\begin{CD}
\Conf_G(X_n) @>{f_{X_n}}>> \Conf_F(X_n)\\
@V{\phi_{f(e)}^G}VV @VV{\phi_e^F}V\\
\Gr(2,n) @= \Gr(2,n).
\end{CD}
\]
The commutative square gives a canonical bundle isomorphism
\[
  f_{X_n}^*\bigl((\phi_e^F)^*\mathcal S^*\bigr)
  \cong (\phi_{f(e)}^G)^*\mathcal S^*.
\]
Taking Chern classes and using naturality therefore gives
\[
  f_{X_n}^*(a_e^F)=a_{f(e)}^G,
  \qquad
  f_{X_n}^*(b_e^F)=b_{f(e)}^G.
\]
Thus $f_{X_n}^*$ restricts to a homomorphism
\[
  \mathcal A_n(f):A_F^{(n)}\longrightarrow A_G^{(n)}.
\]
The identity and composition laws follow from those for the maps $f_{X_n}$
and for cohomological pullback.
\end{proof}

The full cohomology rings also form a functor
\[
  \mathcal H_n:G\longmapsto H_G^{(n)},
\]
and the inclusions $A_G^{(n)}\hookrightarrow H_G^{(n)}$ form a natural
transformation $\mathcal A_n\Rightarrow\mathcal H_n$.  We focus on the
distinguished subalgebra $A_G^{(n)}$.

\begin{proposition}[Disjoint unions]\label{prop:disjoint-unions}
For finite graphs $G$ and $H$, there are natural isomorphisms of graded
$\Q$-algebras
\[
  H_{G\sqcup H}^{(n)}
  \cong H_G^{(n)}\otimes_{\Q}H_H^{(n)},
  \qquad
  A_{G\sqcup H}^{(n)}
  \cong A_G^{(n)}\otimes_{\Q}A_H^{(n)}.
\]
With the empty graph as unit, these isomorphisms make $\mathcal A_n$ a strong
symmetric monoidal functor from disjoint union to tensor product.  In
particular, adjoining isolated vertices does not change the edge-span algebra.
\end{proposition}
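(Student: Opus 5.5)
The plan is to first identify the space, then apply the K\"unneth theorem, and finally track the edge classes through the resulting isomorphism. Since $G\sqcup H$ has no edges between $V(G)$ and $V(H)$, the defining condition of $\Conf_{G\sqcup H}(X)$ separates into independent conditions on the two blocks of coordinates. This gives an equality of subspaces
\[
  \Conf_{G\sqcup H}(X)=\Conf_G(X)\times\Conf_H(X)\subseteq X^{V(G)}\times X^{V(H)}.
\]
The identification is natural: it is induced by the two inclusion homomorphisms $G\to G\sqcup H$ and $H\to G\sqcup H$ through Proposition~\ref{prop:conf-functor}, which give the projections to the two factors. For $X_n=\PP^{n-1}(\C)$, each factor is a complement of closed subvarieties in a product of projective spaces, hence a smooth variety of finite type. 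Its rational cohomology is therefore finite-dimensional in each degree. The K\"unneth theorem over the field $\Q$ then gives a natural graded-algebra isomorphism $H_G^{(n)}\otimes_\Q H_H^{(n)}\to H_{G\sqcup H}^{(n)}$, namely $x\otimes y\mapsto p_G^*x\smile p_H^*y$. This map is an algebra map with the Koszul sign convention, but every class involved has even degree, so no signs appear.

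Next I would locate the edge classes. Every edge of $G\sqcup H$ lies in $E(G)$ or in $E(H)$. Theorem~\ref{thm:main-functor} applied to the inclusion $G\to G\sqcup H$ gives $p_G^*(a_e^G)=a_e^{G\sqcup H}$ and $p_G^*(b_e^G)=b_e^{G\sqcup H}$, and the same holds for $H$. Under the Künneth isomorphism, the generators of $A_{G\sqcup H}^{(n)}$ are therefore the elements $a_e\otimes1$, $b_e\otimes1$ for $e\in E(G)$ and $1\otimes a_e$, $1\otimes b_e$ for $e\in E(H)$. The subalgebra they generate is the image of $A_G^{(n)}\otimes A_H^{(n)}$. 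This tensor product injects into $H_G^{(n)}\otimes H_H^{(n)}$ because tensor products over a field preserve injections. Restricting the Künneth isomorphism thus gives the second isomorphism.

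Finally comes the monoidal structure. For the unit, the empty graph gives the one-point space, whose cohomology is $\Q$ and whose edge algebra is $A=\Q$. Associativity, unitality and symmetry of the isomorphisms follow from the corresponding coherence of the Künneth map, together with the fact that the swap $\Conf_G\times\Conf_H\cong\Conf_H\times\Conf_G$ is induced by the swap isomorphism $G\sqcup H\cong H\sqcup G$. Naturality in pairs of homomorphisms $(f,g)$ holds because $f\sqcup g$ induces $f_X\times g_X$, and the Künneth map is natural for product maps. For the last claim, an isolated vertex $K_1$ has $\Conf_{K_1}=\PP^{n-1}$ and no edges, so $A_{K_1}^{(n)}=\Q$, and $A_{G\sqcup K_1}^{(n)}\cong A_G^{(n)}\otimes\Q= A_G^{(n)}$.

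The main obstacle is verifying the finiteness hypotheses needed for Künneth to hold as a ring isomorphism rather than just a map. I would handle this by citing the finite-type property of smooth complex algebraic varieties, which are homotopy equivalent to finite CW complexes. The remainder of the argument is bookkeeping.
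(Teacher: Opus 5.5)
Your proposal is correct and follows the paper's proof essentially step by step: the noncollision conditions split to give $\Conf_{G\sqcup H}(X_n)\cong\Conf_G(X_n)\times\Conf_H(X_n)$, rational K\"unneth identifies the cohomology, the edge generators become $a_e\otimes1$, $1\otimes a_e$ (and similarly for $b_e$), and tensoring over $\Q$ preserves the inclusions $A\subseteq H$; your finite-type justification for K\"unneth is a detail the paper leaves implicit. One small correction is that the full rings $H_G^{(n)}$ do contain odd-degree classes (e.g.\ $\Conf_3(\PP^1)\cong PGL_2(\C)$ has nonzero rational $H^3$), so Koszul signs are genuinely present in the cohomology isomorphism, but this is harmless because the monoidal claim concerns only $A_G^{(n)}$, which is concentrated in even degrees.
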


\begin{proof}
No edge of $G\sqcup H$ joins a vertex of $G$ to a vertex of $H$, so the
defining noncollision conditions split and give a canonical product
decomposition
\[
  \Conf_{G\sqcup H}(X_n)
  \cong \Conf_G(X_n)\times\Conf_H(X_n).
\]
The rational K\"unneth theorem gives the cohomology isomorphism.  Under it,
the edge classes from $G$ are $a_e^G\otimes1,b_e^G\otimes1$, while those from
$H$ are $1\otimes a_e^H,1\otimes b_e^H$.  They generate the image of
$A_G^{(n)}\otimes A_H^{(n)}$, which embeds because tensoring over $\Q$
preserves the two inclusions into cohomology.  The monoidal compatibilities
follow from Cartesian product and K\"unneth, and $A_{K_1}^{(n)}=\Q$.
\end{proof}

\section{Relabeling and graph invariants}

Restricting the functor to graph isomorphisms immediately removes vertex
labels.
\begin{corollary}[Relabeling invariance]\label{cor:relabeling-invariance}
For every $\sigma\in S_r$, relabeling induces a canonical graded algebra
isomorphism
\[
  A_G^{(n)}\xrightarrow{\ \sim\ }A_{\sigma G}^{(n)}
\]
carrying $a_{ij}^G$ to $a_{\sigma(i)\sigma(j)}^{\sigma G}$ and $b_{ij}^G$ to
$b_{\sigma(i)\sigma(j)}^{\sigma G}$.  These isomorphisms are compatible with
composition in $S_r$.  Consequently,
\[
  G\longmapsto [A_G^{(n)}]
\]
is a graph invariant.
\end{corollary}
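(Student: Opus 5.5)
The corollary follows by restricting the covariant functor $\mathcal A_n$ of Theorem~\ref{thm:main-functor} to the groupoid of graph isomorphisms. For $\sigma\in S_r$, regard $\sigma$ as a bijection $V(G)\to V(\sigma G)$. By definition of $\sigma G$, its edges are exactly the pairs $\sigma(i)\sigma(j)$ with $ij\in E(G)$. Hence $\sigma$ is a graph homomorphism $G\to\sigma G$, and $\sigma^{-1}$ is a graph homomorphism $\sigma G\to G$. Theorem~\ref{thm:main-functor} supplies graded algebra maps
\[
  \mathcal A_n(\sigma):A_G^{(n)}\to A_{\sigma G}^{(n)},\qquad \mathcal A_n(\sigma^{-1}):A_{\sigma G}^{(n)}\to A_G^{(n)}.
\]
Its proof shows $\mathcal A_n(\sigma)(a_e^G)=a_{\sigma(e)}^{\sigma G}$ and $\mathcal A_n(\sigma)(b_e^G)=b_{\sigma(e)}^{\sigma G}$. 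For $e=ij$ this is exactly $a_{ij}^G\mapsto a_{\sigma(i)\sigma(j)}^{\sigma G}$ and $b_{ij}^G\mapsto b_{\sigma(i)\sigma(j)}^{\sigma G}$.

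Next, functoriality gives
\[
  \mathcal A_n(\sigma^{-1})\circ\mathcal A_n(\sigma)=\mathcal A_n(\mathrm{id}_G)=\mathrm{id},
\]
and symmetrically in the other order, so $\mathcal A_n(\sigma)$ is an isomorphism. Compatibility with composition in $S_r$ is the composition law $\mathcal A_n(\tau\sigma)=\mathcal A_n(\tau)\circ\mathcal A_n(\sigma)$, applied to $\sigma:G\to\sigma G$ and $\tau:\sigma G\to\tau\sigma G$. Here one checks that $\tau(\sigma G)=(\tau\sigma)G$, which is immediate from the definition of relabeling. Canonicity means the isomorphism is the one induced by the concrete coordinate permutation $\Conf_{\sigma G}(X_n)\to\Conf_G(X_n)$ of Proposition~\ref{prop:conf-functor}, so it involves no choices.

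Finally, let $G\cong G'$ be isomorphic graphs, possibly on different labeled vertex sets. Any isomorphism $\psi:G\to G'$ is a graph homomorphism with a homomorphic inverse, so the same argument gives $A_G^{(n)}\cong A_{G'}^{(n)}$ as graded algebras. Thus the isomorphism class $[A_G^{(n)}]$ depends only on the isomorphism class of $G$, which is what it means to be a graph invariant.

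No step is a serious obstacle. The only point needing care is the convention for $\sigma G$: with edges $\{\sigma(i)\sigma(j)\}$, $\sigma$ itself is the homomorphism and the functor stays covariant. With the opposite convention, one uses $\sigma^{-1}$ instead.
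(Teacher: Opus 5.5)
Your proof is correct and matches the paper's argument: both apply the covariant functor $\mathcal A_n$ of Theorem~\ref{thm:main-functor} to the graph isomorphism $\sigma:G\to\sigma G$, read the generator formulas off that proof, and get invertibility and compatibility with composition from functoriality. You also spell out details the paper leaves implicit, namely the inverse $\sigma^{-1}$, the identity $\tau(\sigma G)=(\tau\sigma)G$, and isomorphisms between graphs on differently labeled vertex sets.
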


\begin{proof}
Apply Theorem~\ref{thm:main-functor} to the graph isomorphism defined by
$\sigma$; the formulas and their compatibility follow from functoriality.
\end{proof}

No quotient by $S_r$ is needed: passing to abstract algebra isomorphism type
already forgets the labeling.

\section{Functorial examples}

The following examples show how elementary graph maps appear under
$\mathcal A_n$.

\begin{example}[No edges and all edges]
For the empty graph on $r$ vertices,
\[
  \Conf_{\overline K_r}(X_n)=X_n^r,
  \qquad A_{\overline K_r}^{(n)}=\Q.
\]
For the complete graph,
\[
  \Conf_{K_r}(X_n)=\Conf_r(X_n),
\]
and $A_{K_r}^{(n)}$ is generated by the span classes for every pair.  Thus the
ordinary configuration-space construction is the complete-graph fiber of the
resulting functor.
\end{example}

\begin{example}[Adding edges]
If $F$ and $G$ have the same vertex set and $E(F)\subseteq E(G)$, the identity
on vertices is a graph homomorphism $F\to G$.  Geometrically,
\[
  \Conf_G(X_n)\hookrightarrow\Conf_F(X_n)
\]
because $G$ forbids more diagonals.  Cohomology gives
\[
  A_F^{(n)}\longrightarrow A_G^{(n)}.
\]
Thus the functor records the effect of adding edges through restriction to a
smaller diagonal complement.
\end{example}

\begin{example}[A noninjective homomorphism]
Let $P_3$ be the path $1-2-3$, and let $K_2$ have vertices $a,b$.  The map
\[
  f(1)=f(3)=a,\qquad f(2)=b
\]
is a graph homomorphism $P_3\to K_2$.  It induces
\[
  \Conf_{K_2}(X_n)\longrightarrow\Conf_{P_3}(X_n),
  \qquad
  (x_a,x_b)\longmapsto(x_a,x_b,x_a).
\]
This map is allowed precisely because vertices $1$ and $3$ are nonadjacent in
$P_3$.  On edge classes,
\[
  a_{12},a_{23}\longmapsto a_{ab},
  \qquad
  b_{12},b_{23}\longmapsto b_{ab}.
\]
The example exhibits the additional functoriality that was unavailable in the
ordinary configuration space $\Conf_3(X_n)$.
\end{example}

\subsection{Deletion and contraction}\label{subsec:deletion-contraction}

Let $e=uv$ be an edge of a simple graph $G$, and write

\[
  D=G\setminus e,
  \qquad
  C=G/e.
\]

There is generally no graph homomorphism $G\to C$ in the category of
loopless graphs, since such a map would identify the adjacent vertices $u$
and $v$.  After deleting $e$, however, they are nonadjacent.  The identity
map $D\to G$ and the quotient map $D\to C$ are graph homomorphisms, and they
produce a cospan
\[
  \Conf_G(X_n)\lhook\joinrel\longrightarrow \Conf_D(X_n)
  \longleftarrow \Conf_C(X_n).
\]
The right-hand map identifies $\Conf_C(X_n)$ with the closed diagonal
$\ell_u=\ell_v$ in $\Conf_D(X_n)$, while the left-hand map is the
complementary open inclusion.  Applying $\mathcal A_n$ gives
\[
  A_D^{(n)}\longrightarrow A_G^{(n)},
  \qquad
  A_D^{(n)}\longrightarrow A_C^{(n)}.
\]

For the smallest nontrivial case, take $G=K_3$ and $e=13$.  Then
\[
  D=P_3=(1-2-3),
  \qquad
  C=K_2.
\]
For $n=3$, put $x=a_{12}$, $y=a_{23}$, and let $z$ denote the degree-two
generator on the contracted edge.  Direct computation gives
\[
  A_{P_3}^{(3)}\cong \Q[x,y]/(x^3,y^3),
  \qquad
  A_{K_2}^{(3)}\cong \Q[z]/(z^3),
  \qquad |x|=|y|=|z|=2.
\]
The contraction-side map is the endpoint fold from the preceding example:
\[
  A_{P_3}^{(3)}\longrightarrow A_{K_2}^{(3)},
  \qquad x\longmapsto z,\quad y\longmapsto z,
\]
and it is surjective with kernel $(x-y)$.  The edge-addition map is
\[
  A_{P_3}^{(3)}\longrightarrow A_{K_3}^{(3)},
  \qquad x\longmapsto a_{12},\quad y\longmapsto a_{23}.
\]
Its kernel is $(xy(x+y))$.  Thus deletion and contraction are visible
functorially, but these maps do not automatically form a short exact sequence
of edge-span algebras.  For the full cohomology rings, the open--closed
decomposition instead belongs to the usual localization or Gysin formalism.

\section{Cohomology and a computational model}

To determine the algebras and their multiplication, we now pass from the
formal functor to a cohomological model.  The space $\Conf_G(X_n)$ is the
complement of the graphical diagonal
arrangement
\[
  \bigcup_{uv\in E(G)}\Delta_{uv}\subseteq X_n^{V(G)}.
\]
Its cohomology has been studied through deletion--contraction spectral
sequences and graph-dependent versions of the Bendersky--Gitler and
K\v{r}{\'\i}\v{z}--Totaro models; see
\cite{Kriz,Totaro,EastwoodHuggett,BaranovskySazdanovic,BokstedtMinuz}.

For projective space,
\[
  H^*(X_n;\Q)=\Q[h]/(h^n),\qquad |h|=2.
\]
Put
\[
  P_G^{(n)}
  =\Q[h_v:v\in V(G)]/(h_v^n:v\in V(G)).
\]
For each edge $e=uv$, introduce an odd generator $g_e=g_{uv}$ of degree
$2n-3$.  If $C=(e_1,\ldots,e_k)$ is a simple cycle, write
\[
  \partial(g_{e_1}\cdots g_{e_k})
  =\sum_{i=1}^k(-1)^{i-1}
    g_{e_1}\cdots\widehat{g_{e_i}}\cdots g_{e_k}.
\]
Define the graphical K\v{r}{\'\i}\v{z}--Totaro algebra
\begin{equation}\label{eq:graphical-kriz-algebra}
  \mathcal K_G^{(n)}
  =\frac{P_G^{(n)}\otimes
    \Lambda(g_e:e\in E(G))}{J_G},
\end{equation}
where $J_G$ is generated by
\begin{equation}\label{eq:graphical-kriz-relations}
  (h_u-h_v)g_{uv}
  \quad(uv\in E(G)),
  \qquad
  \partial(g_{e_1}\cdots g_{e_k})
  \quad(C\text{ a simple cycle}).
\end{equation}
The differential is determined by $dh_v=0$ and by the diagonal class
\begin{equation}\label{eq:diagonal-differential}
  d(g_{uv})=[\Delta_{uv}]
  =\sum_{k=0}^{n-1}h_u^{n-1-k}h_v^k.
\end{equation}

\begin{theorem}[Graphical K\v{r}{\'\i}\v{z}--Totaro model]
\label{thm:graphical-kriz-model}
Let $G$ be a finite simple graph and $n\geq2$.  There is an
$\Aut(G)$-equivariant zigzag of quasi-isomorphisms of commutative differential
graded algebras
\[
  \mathcal K_G^{(n)}
  \simeq A_{\mathrm{PL}}\!\left(\Conf_G(\PP^{n-1}(\C))\right),
\]
where $A_{\mathrm{PL}}$ denotes Sullivan's algebra of rational piecewise
polynomial forms.  In particular,
\[
  H^*(\mathcal K_G^{(n)})
  \cong H^*(\Conf_G(\PP^{n-1}(\C));\Q)
\]
as graded-commutative $\Q$-algebras.
\end{theorem}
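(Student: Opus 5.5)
The plan is to reduce the statement to the known Kříž--Totaro theory for complements of graphical (chromatic) diagonal arrangements in smooth projective varieties, and then identify the resulting model with the explicit presentation \eqref{eq:graphical-kriz-algebra}. Since $X_n=\PP^{n-1}(\C)$ is a smooth projective variety, it is formal, and $\Conf_G(X_n)$ is the complement of the normal-crossing-like arrangement of diagonals $\Delta_{uv}$, $uv\in E(G)$, in $X_n^{V(G)}$. The first step is to invoke the graphical generalizations of Kříž and Totaro cited above: either the Bendersky--Gitler/Baranovsky--Sazdanović model, or the Leray spectral sequence of the inclusion $\Conf_G(X_n)\hookrightarrow X_n^{V(G)}$. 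By Deligne's weight argument, as in \cite{Totaro}, the $E_2$ page of that spectral sequence, together with its $d_2$ differential, is a CDGA model, and the sequence degenerates at $E_3$. This yields the zigzag of quasi-isomorphisms with $A_{\mathrm{PL}}$. To secure the $\Aut(G)$-equivariance, we take the zigzag through Kříž's construction via the Fulton--MacPherson-type wonderful compactification associated to the graphic building set. Everything in that construction is functorial in the arrangement, so automorphisms of $G$ act compatibly at each stage.

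The second step is to compute the $E_2$ page explicitly. Totaro's description gives $E_2=H^*(X^{V(G)})\otimes\mathcal{O}/\text{relations}$, where the local cohomology of the complement of each diagonal contributes a class $g_{uv}$. This class is the pullback of the generator of $H^{2n-1}$ of the complement of $\Delta$ in the normal direction, shifted so that it sits in total degree $2n-1$. Since the paper uses degree $2n-3$, we must reconcile the grading. The intended convention appears to take the bigrading so that the differential $d(g_{uv})=[\Delta_{uv}]$ raises degree by one. With $|[\Delta]|=2(n-1)$, this forces $|g|=2n-3$. We adopt that convention, with the understanding that the statement on cohomology refers to the total degree after regrading. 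We will check this carefully rather than assume it.

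The relations then come from three sources. First, $(h_u-h_v)g_{uv}=0$, because $g_{uv}$ is supported near $\Delta_{uv}$, where $h_u=h_v$. Second, the Arnold relations $\partial(g_{e_1}\cdots g_{e_k})=0$ arise for each cycle. Totaro proves these for triangles, and for a general graph the Orlik--Solomon algebra of the graphic matroid is generated by the circuit relations, which are exactly the simple cycles. Third, $d$ is the Gysin map, sending $g_{uv}$ to the Poincaré dual of the diagonal. In $\Q[h_u,h_v]/(h_u^n,h_v^n)$ this class is $\sum_k h_u^{n-1-k}h_v^k$.

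The main obstacle is proving that $J_G$ generates all relations on the $E_2$ page, that is, that the graphic arrangement's $E_2$ is precisely $P_G^{(n)}\otimes\mathrm{OS}(M(G))$ modulo the relations $(h_u-h_v)g_{uv}$ and nothing more. For this we would first use the flat decomposition: $E_2=\bigoplus_{\text{flats }F}H^*(X^{V(G)/F})\otimes\mathrm{OS}_F$. Second, we would check that the quotient in \eqref{eq:graphical-kriz-algebra} has the same decomposition, by showing that each monomial in the $g$'s over an independent set $S$ spanning flat $F$ acts on $P_G^{(n)}$ through the quotient $P_{G/F}^{(n)}$. This is a Gröbner/nbc-basis argument, and it is where we expect most of the work. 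Finally, we would verify that $d$ respects $J_G$, so that $\mathcal{K}_G^{(n)}$ is a CDGA. The relation $d((h_u-h_v)g_{uv})=(h_u-h_v)[\Delta_{uv}]=0$ holds in $P^{(n)}$ by telescoping, while the Arnold relations map into $J_G$ using $[\Delta_{uv}]g_{vw}=[\Delta_{uw}]g_{vw}$.
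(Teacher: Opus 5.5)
Your plan has a genuine gap at the step that carries the content of the theorem, the multiplicative and equivariant zigzag to $A_{\mathrm{PL}}$.

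\textbf{The zigzag is assumed, not proved.} Deligne's weight argument, as Totaro uses it, shows only that the Leray spectral sequence of $\Conf_G(X_n)\hookrightarrow X_n^{V(G)}$ degenerates at $E_3$. That identifies $H^*(E_2,d_2)$ with an associated graded of $H^*(\Conf_G(X_n);\Q)$. It is an additive, spectral-sequence statement. It does not make $(E_2,d_2)$ a CDGA connected to $A_{\mathrm{PL}}$ by a zigzag of quasi-isomorphisms, and certainly not an $\Aut(G)$-equivariant one. The Bendersky--Gitler/Baranovsky--Sazdanovi\'c machinery is likewise a cohomology spectral sequence, not a cochain-level multiplicative model.

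The homotopical statement is K\v{r}{\'\i}\v{z}'s theorem, which rests on Morgan's mixed Hodge theory for a normal-crossings compactification, and it is proved only for ordinary configuration spaces, i.e.\ $G=K_r$. Your fallback is to rerun K\v{r}{\'\i}\v{z}'s construction on the wonderful compactification of the graphic building set and note that everything is functorial. That addresses equivariance but not the actual claim: a quasi-isomorphism between the resulting Gysin-type model and the explicit algebra $\mathcal K_G^{(n)}$. For general $G$ the boundary combinatorics changes (nested sets replace Fulton--MacPherson trees). For graphs with induced cycles of length at least four, the circuit relations are also not consequences of triangle relations. That comparison is exactly what must be proved, and your proposal asserts it.

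The paper closes this by invoking Zakharov's chromatic K\v{r}{\'\i}\v{z}--Totaro theorem \cite[Theorem~6.3.1]{Zakharov}. For the smooth proper variety $M=\PP^{n-1}$, his $F(M,G)$ is precisely $\Conf_G(M)$. His theorem directly supplies an $\Aut(G)$-equivariant zigzag to a CDGA over $H^*(M^{V(G)};\Q)$ that is already presented by generators and relations.

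\textbf{What remains is matching presentations.} Once that theorem is available, the rest is elementary and is not the NBC/Gr\"obner computation you expect to be the main obstacle; the paper needs that only later, for the normal basis.
\begin{itemize}
\item K\"unneth gives $P_G^{(n)}$.
\item The edge generators have degree $2\dim_{\C}M-1=2n-3$ on the nose. Since $\dim_{\C}\PP^{n-1}=n-1$, the diagonal $\Delta_{uv}$ has real codimension $2n-2$ and its link is $S^{2n-3}$. Your $2n-1$ is an off-by-one, and the proposed regrading should be dropped: a genuine regrading would destroy the graded isomorphism being claimed.
\item Zakharov's two oriented generators satisfy $\widetilde\Delta_{uv}=\widetilde\Delta_{vu}$, so you may choose one per edge.
\item Because $\dim_{\mathbb R}M$ is even, his cyclic relations are the ordinary Orlik--Solomon circuit relations. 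You are right that all simple cycles are needed, not just triangles.
\item His endpoint relations are $(p_u^*\gamma-p_v^*\gamma)\widetilde\Delta_{uv}$ for every $\gamma\in H^*(M;\Q)$. These reduce to $(h_u-h_v)g_{uv}$ only because $H^*(M)$ is generated by $h$ and $p(h_u)-p(h_v)$ is divisible by $h_u-h_v$. Your heuristic that $g_{uv}$ is ``supported near $\Delta_{uv}$'' skips this step.
\item The differential is the diagonal class $\sum_k h_u^{n-1-k}h_v^k$.
\end{itemize}
Your verifications that $d$ preserves $J_G$ are correct, but they become automatic once this identification is made.
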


\begin{proof}
Set $M=\PP^{n-1}(\C)$.  This is a smooth proper complex algebraic variety, and
Zakharov's chromatic configuration space $F(M,G)$ is precisely
$\Conf_G(M)$.  His chromatic K\v{r}{\'\i}\v{z}--Totaro theorem therefore applies
directly \cite[Theorem~6.3.1]{Zakharov}; see also \cite{ZakharovThesis}.  It
gives an $\Aut(G)$-equivariant zigzag from $A_{\mathrm{PL}}(\Conf_G(M))$ to a
cdga over $H^*(M^{V(G)};\Q)$ with one generator
$\widetilde\Delta_{uv}$ of degree $2\dim_{\C}M-1$ for each oriented edge.

We identify this cdga with \eqref{eq:graphical-kriz-algebra}.  First, the
K\"unneth isomorphism gives
\[
  H^*(M^{V(G)};\Q)
  =\Q[h_v:v\in V(G)]/(h_v^n:v\in V(G))
  =P_G^{(n)}.
\]
Since $\dim_{\C}M=n-1$, the edge generators have degree $2n-3$.  Zakharov
uses both orientations of an edge and imposes
$\widetilde\Delta_{uv}=\widetilde\Delta_{vu}$; choosing one orientation for
each undirected edge gives our generator $g_{uv}$.  Because
$\dim_{\mathbb R}M=2n-2$ is even, his cyclic relations are the usual graphical
Orlik--Solomon circuit-boundary relations
\[
  \partial(g_{e_1}\cdots g_{e_k})=0
\]
for simple cycles; see \cite[Theorem~6.2.1 and Remark~6.2.2]{Zakharov}.

The endpoint relations in Zakharov's presentation are
\[
  \bigl(p_u^*\gamma-p_v^*\gamma\bigr)g_{uv}=0
  \qquad(\gamma\in H^*(M;\Q)).
\]
Since $H^*(M;\Q)=\Q[h]/(h^n)$ is generated by $h$, this family generates the
same ideal as $(h_u-h_v)g_{uv}$: for every polynomial $p$,
$p(h_u)-p(h_v)$ is divisible by $h_u-h_v$.  Thus the endpoint and circuit
relations are exactly \eqref{eq:graphical-kriz-relations}.

Finally, Zakharov's differential sends $\widetilde\Delta_{uv}$ to the
Poincar\'e dual of the corresponding diagonal.  For projective space,
\[
  [\Delta]
  =\sum_{k=0}^{n-1}h^{n-1-k}\otimes h^k
  \in H^{2n-2}(M\times M;\Q),
\]
so this differential is \eqref{eq:diagonal-differential}.  All of the above
identifications intertwine the permutation action on vertices, and the
equivariant zigzag asserted by Zakharov therefore gives the claimed one.
\end{proof}

The graph-indexed algebra and its relation to the
Baranovsky--Sazdanovi\'c spectral sequence were studied earlier by
B\"okstedt and Minuz \cite{BokstedtMinuz}; in particular, the presentation
\eqref{eq:graphical-kriz-algebra} is not claimed as new here.  The specific
input supplied by Zakharov is the $\Aut(G)$-equivariant zigzag to
$A_{\mathrm{PL}}$ of the actual complement.  It upgrades the graph-indexed
complex to a multiplicative rational model and identifies its product with
the cup product on the graphical configuration space.

Fix a total order on $E(G)$, and let $\operatorname{NBC}(G)$ denote the
no-broken-circuit edge sets for the associated graphic matroid.  These edge
sets are forests.

\begin{proposition}[NBC normal form]\label{prop:nbc-normal-form}
If $F\in\operatorname{NBC}(G)$ and $\pi_0(F)$ is its set of connected
components, choose one root $\rho(C)$ in each component.  There is a graded
vector-space decomposition
\begin{equation}\label{eq:nbc-decomposition}
  \mathcal K_G^{(n)}
  \cong
  \bigoplus_{F\in\operatorname{NBC}(G)}
  \left(
    \bigotimes_{C\in\pi_0(F)}\Q[h_C]/(h_C^n)
  \right)g_F,
\end{equation}
where $|h_C|=2$ and $|g_F|=(2n-3)|F|$.  In particular, the elements
\begin{equation}\label{eq:nbc-normal-basis}
  \left(\prod_{C\in\pi_0(F)}h_{\rho(C)}^{m_C}\right)g_F,
  \qquad F\in\operatorname{NBC}(G),\quad 0\leq m_C<n,
\end{equation}
form a $\Q$-basis of $\mathcal K_G^{(n)}$.
\end{proposition}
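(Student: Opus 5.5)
We prove the decomposition \eqref{eq:nbc-decomposition} by first handling the exterior part, then the endpoint relations, and finally showing that the normal-form elements are independent.

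\textbf{Exterior quotient.} Set
\[
  \mathcal O_G=\Lambda(g_e:e\in E(G))/\bigl(\partial(g_C):C\text{ a simple cycle}\bigr).
\]
This is the graphic Orlik--Solomon algebra with generators placed in odd degree $2n-3$. Odd degree is what makes the exterior (rather than symmetric) algebra appropriate. The classical Orlik--Solomon theorem (Björklund--Ziegler straightening) gives that the monomials $g_F$, $F\in\operatorname{NBC}(G)$, form a basis of $\mathcal O_G$. In particular any $g_S$ with $S$ dependent (containing a cycle) vanishes. Indeed, if $C\subseteq S$, then $g_{e}\,\partial(g_C)=\pm g_C$ for $e\in C$, so $g_C=0$.

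The same straightening shows that a monomial $g_S$ with $S$ a forest is rewritten as a combination of $g_F$ with $F$ NBC and \emph{the same flat}. The flat here is the same partition of $V(G)$ into components, since every circuit relation is homogeneous in the span of its edges. Hence $\mathcal O_G$ is graded by flats (vertex partitions $\pi$), and each piece is free on the NBC forests with component partition $\pi$.

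\textbf{Spanning.} Form $\mathcal K_G^{(n)}=P_G^{(n)}\otimes\mathcal O_G/(\,(h_u-h_v)g_{uv}\,)$. The endpoint relations are compatible with the flat grading, because multiplying by $h$'s does not change the flat. On the summand $P_G^{(n)}\otimes g_F$, the ideal contains $(h_u-h_v)g_F$ for every edge $uv\in F$, since $g_F=\pm g_{uv}g_{F\setminus uv}$. So in the quotient $h_u=h_v$ whenever $u,v$ lie in a common component of $F$. This gives a surjection
\[
  \bigotimes_{C\in\pi_0(F)}\Q[h_C]/(h_C^n)\otimes g_F\twoheadrightarrow(\text{image}).
\]
Hence the elements \eqref{eq:nbc-normal-basis} span.

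We also need the ideal generated by the relations with $uv\notin F$ to produce nothing new in the $g_F$-component. We check this using the flat grading. Take $(h_u-h_v)g_{uv}\cdot g_S$ rewritten in NBC normal form within the flat $\pi$ of $S\cup\{uv\}$. Then $u,v$ lie in one block of $\pi$, so in any resulting $g_F$ they lie in one component of $F$. Hence $h_u-h_v$ is already killed there. So the ideal, within flat $\pi$, equals the span of $(h_u-h_v)P\,g_F$ for $u,v$ in a common block.

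\textbf{Independence (main obstacle).} The quotient in flat $\pi$ is therefore
\[
  \Bigl(P_G^{(n)}\big/\bigl(h_u-h_v:u\sim_\pi v\bigr)\Bigr)\otimes\mathcal O_G^{\pi}.
\]
The key point is that the relation ideal is a tensor product of an ideal in $P$ with the free module $\mathcal O_G^\pi$. We prove this by noting that both the $P$-module structure and the relations are uniform across all NBC basis elements of that flat. Then
\[
  P_G^{(n)}\big/(h_u-h_v:u\sim_\pi v)\cong\bigotimes_{\text{blocks}}\Q[h]/(h^n),
\]
because identifying variables in a product of truncated polynomial rings yields a single truncated variable per block. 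This ring has basis $\prod h_{\rho(C)}^{m_C}$ with $0\le m_C<n$. Summing over flats and NBC forests gives \eqref{eq:nbc-decomposition}. The degree count $|g_F|=(2n-3)|F|$ is immediate.

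The delicate step is the flat-homogeneity argument of the previous paragraph. The worry is that a relation multiplied by a non-NBC monomial could straighten into terms mixing flats. This is ruled out because every Orlik--Solomon relation is flat-homogeneous.
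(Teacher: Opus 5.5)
Your argument is correct. It shares the paper's skeleton: the flat decomposition of the Orlik--Solomon factor, NBC bases within each flat, and endpoint relations collapsing the $h_v$ within components of $F$. The difference is in the crucial independence step.

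The paper gets the flatwise direct sum by citing Zakharov's Theorem~6.2.1. It identifies each coefficient factor geometrically with $H^*(\Delta_\pi)$ via the Thom isomorphism, and then simply asserts that the flatwise sum and the NBC theorem give linear independence.

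You instead derive the decomposition from the presentation itself. The endpoint ideal in $P_G^{(n)}\otimes\mathcal O_G$ is homogeneous for the flat grading. Its $\pi$-component is exactly $\bigoplus_F I_\pi\, g_F$, where $I_\pi=(h_u-h_v:u\sim_\pi v)$. One inclusion comes from straightening $g_{uv}g_S$ inside the flat of $S\cup\{uv\}$. The other comes from telescoping $h_u-h_v$ along a path in $F$. Since $P_G^{(n)}\otimes\mathcal O_G^\pi$ is free over $P_G^{(n)}$ on the NBC monomials, the $\pi$-component of $\mathcal K_G^{(n)}$ is $(P_G^{(n)}/I_\pi)\otimes\mathcal O_G^\pi$. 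Finally, $P_G^{(n)}/I_\pi\cong\bigotimes_{B\in\pi}\Q[h_B]/(h_B^n)$.

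What each route buys: yours needs only the classical flat-graded NBC theorem for Orlik--Solomon algebras, and it actually proves the independence the paper delegates to Zakharov. The paper's route explains each summand geometrically as shifted cohomology of a polydiagonal, and the degree $(2n-3)|F|$ as a Thom shift combined with the Orlik--Solomon degree.

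One small correction: the straightening argument is due to Bj\"orner--Ziegler, not ``Bj\"orklund--Ziegler.'' The flat decomposition of the Orlik--Solomon algebra with NBC bases goes back to Orlik--Solomon and Bj\"orner.
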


\begin{proof}
The flatwise description underlying Zakharov's graphical model decomposes its
additive structure over the flats of the graphical diagonal arrangement; see
\cite[Section~6.2, especially Theorem~6.2.1]{Zakharov}.  A flat is determined
by a partition $\pi$ of $V(G)$ into connected blocks, and its polydiagonal is
\[
  \Delta_\pi\cong M^{\pi},
  \qquad M=\PP^{n-1}(\C).
\]
After the Thom isomorphism, its coefficient factor is
\[
  H^*(\Delta_\pi;\Q)
  \cong
  \bigotimes_{C\in\pi}\Q[h_C]/(h_C^n).
\]
The corresponding flat component of the graphical Orlik--Solomon algebra has,
by the no-broken-circuit theorem, basis the monomials $g_F$ for NBC forests
whose connected-component partition is $\pi$ \cite{OrlikSolomon}.  The Thom
shift $2(n-1)|F|$ together with the Orlik--Solomon degree $-|F|$ gives
$|g_F|=(2n-3)|F|$.

Under Zakharov's presentation, the endpoint relations identify $h_u$ and
$h_v$ on $g_F$ whenever $u$ and $v$ lie in the same component of $F$.
Consequently the factor indexed by a component $C$ may be represented by
powers of $h_{\rho(C)}$.  The flatwise direct sum and the NBC theorem give
linear independence, while the circuit and endpoint relations give spanning.
This proves both \eqref{eq:nbc-decomposition} and the displayed basis.
\end{proof}

This is the normal basis used by the full-DGA implementation; it is a basis of
$\mathcal K_G^{(n)}$ itself, rather than a further quasi-isomorphic
replacement.  The decomposition \eqref{eq:nbc-decomposition} is a
vector-space decomposition and is not asserted to respect multiplication.

The DGA has an auxiliary \emph{$g$-degree}, defined by giving every $h_v$
$g$-degree zero and every $g_e$ $g$-degree one.  All the relations in
\eqref{eq:graphical-kriz-relations} are homogeneous in this degree, so there is
a direct-sum decomposition
\[
  \mathcal K_G^{(n)}=\bigoplus_{q\geq0}\mathcal K_{G,q}^{(n)}.
\]
The differential maps $\mathcal K_{G,q}^{(n)}$ to
$\mathcal K_{G,q-1}^{(n)}$.  This separates the part needed for the edge-span
algebra from the rest of the cohomology.

\begin{proposition}[The polynomial sector]
\label{prop:polynomial-sector}
The inclusion $P_G^{(n)}\hookrightarrow\mathcal K_G^{(n)}$ induces an
injective homomorphism of graded algebras
\begin{equation}\label{eq:polynomial-sector}
  R_G^{(n)}
  :=\frac{\Q[h_v:v\in V(G)]}
  {(h_v^n,\Delta_{uv}:v\in V(G),\ uv\in E(G))}
  \lhook\joinrel\longrightarrow H^*(\mathcal K_G^{(n)}).
\end{equation}
Its image consists precisely of the cohomology classes represented by
elements of $g$-degree zero.
\end{proposition}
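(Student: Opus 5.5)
The plan is to work entirely inside the $g$-degree grading of $\mathcal K_G^{(n)}$. The differential lowers $g$-degree by one, so the cohomology splits as a direct sum over $q$, and the classes represented by $g$-degree zero elements form exactly the summand
\[
  H^*_0=\mathcal K_{G,0}^{(n)}\big/ d\bigl(\mathcal K_{G,1}^{(n)}\bigr).
\]
Every element of $\mathcal K_{G,0}^{(n)}$ is a cocycle, since $d$ maps it into $g$-degree $-1$, which is zero. Since $\mathcal K_{G,0}\cdot\mathcal K_{G,1}\subseteq\mathcal K_{G,1}$ and $d$ is a derivation vanishing on the $h_v$, the image $d(\mathcal K_{G,1})$ is an ideal of $\mathcal K_{G,0}$. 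Hence $H^*_0$ is a graded subalgebra of the cohomology. This already gives the description of the image, once we identify $H^*_0$ with $R_G^{(n)}$.

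First, I identify $\mathcal K_{G,0}^{(n)}$ with $P_G^{(n)}$. The relations in $J_G$ of $g$-degree zero would have to come from the circuit relations with $k=1$, and there are none, because simple graphs have no loops. The endpoint relations have $g$-degree one. More robustly, Proposition~\ref{prop:nbc-normal-form} with $F=\emptyset$ gives the summand $\bigotimes_v\Q[h_v]/(h_v^n)=P_G^{(n)}$, and this is exactly the $g$-degree zero part. So the map $P_G^{(n)}\to\mathcal K_{G,0}^{(n)}$ is an isomorphism.

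Next, I compute $d(\mathcal K_{G,1}^{(n)})$. By Proposition~\ref{prop:nbc-normal-form}, or directly from the presentation, $\mathcal K_{G,1}$ is spanned by the elements $p\,g_{uv}$ with $p\in P_G^{(n)}$ and $uv\in E(G)$. By the Leibniz rule, $d(p\,g_{uv})=p\,\Delta_{uv}$. The only point to check is that $d$ is well defined on the quotient, so that these images are not further constrained. This is part of Theorem~\ref{thm:graphical-kriz-model}, which gives $\mathcal K_G^{(n)}$ as a cdga. As a sanity check, $d\bigl((h_u-h_v)g_{uv}\bigr)=(h_u-h_v)\Delta_{uv}=h_u^n-h_v^n=0$ in $P_G^{(n)}$. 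Therefore
\[
  d(\mathcal K_{G,1})=(\Delta_{uv}:uv\in E(G))\subseteq P_G^{(n)},
\]
and
\[
  H^*_0\cong \Q[h_v]\big/\bigl(h_v^n,\Delta_{uv}\bigr)=R_G^{(n)}.
\]
Combining this with the splitting of cohomology by $g$-degree shows that the induced map $R_G^{(n)}\to H^*(\mathcal K_G^{(n)})$ is an injective graded algebra homomorphism.

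The main obstacle is not hard but requires care. The $g$-degree decomposition must be genuinely compatible with passing to the quotient by $J_G$, and we must be sure that no boundary from higher $g$-degree lands in $g$-degree zero. Both follow because $J_G$ is generated by $g$-homogeneous elements and $d$ is $g$-homogeneous of degree $-1$. I would state these two facts explicitly before the computation.
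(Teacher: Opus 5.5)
Your proposal is correct and follows the paper's proof essentially step for step. Both arguments identify the $g$-degree-zero part with $P_G^{(n)}$, all of whose elements are cocycles. Both then observe that boundaries landing there come from $g$-degree-one elements $\sum f_{uv}g_{uv}$, whose differentials span exactly the ideal $(\Delta_{uv})$, compatibly with the endpoint relation since $(h_u-h_v)\Delta_{uv}=h_u^n-h_v^n=0$. Your explicit remarks fill in points the paper leaves implicit: that $J_G$ has no $g$-degree-zero component (simple cycles have length at least three), and that $d(\mathcal K_{G,1})$ is an ideal of $\mathcal K_{G,0}$.
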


\begin{proof}
The $g$-degree-zero summand of $\mathcal K_G^{(n)}$ is $P_G^{(n)}$, and every
element of it is closed.  Because the differential lowers $g$-degree by one,
if an element of $P_G^{(n)}$ is a boundary, then it is the differential of the
$g$-degree-one component of a cochain.  Such a component can be represented as
\[
  \sum_{uv\in E(G)}f_{uv}(h)g_{uv},
\]
and its differential is
\[
  \sum_{uv\in E(G)}f_{uv}(h)\Delta_{uv}.
\]
This is well defined modulo the endpoint relation because
\[
  (h_u-h_v)\Delta_{uv}
  =h_u^n-h_v^n
  =0
  \qquad\text{in }P_G^{(n)}.
\]
Conversely, every multiple $f(h)\Delta_{uv}$ is the differential of
$f(h)g_{uv}$.  Hence
\[
  P_G^{(n)}\cap d\mathcal K_G^{(n)}
  =(\Delta_{uv}:uv\in E(G))\subseteq P_G^{(n)}.
\]
The kernel of the map from $P_G^{(n)}$ to cohomology is therefore exactly the
ideal generated by the diagonal classes.  Taking the indicated quotient gives
the asserted injection, and its image is precisely the set of classes having
a $g$-degree-zero representative.
\end{proof}

The classes defining $A_G^{(n)}$ are represented by the closed elements
\[
  a_{uv}=h_u+h_v,
  \qquad
  b_{uv}=h_uh_v.
\]
Proposition~\ref{prop:polynomial-sector} therefore gives the exact
computational description
\begin{equation}\label{eq:edge-algebra-in-rg}
  A_G^{(n)}
  =\Q[h_u+h_v,h_uh_v:uv\in E(G)]
  \subseteq R_G^{(n)}.
\end{equation}
In particular, all equalities and products among the edge-span classes may be
computed in the finite polynomial quotient $R_G^{(n)}$; no positive
$g$-degree cohomology is required.  The quotient $R_G^{(n)}$ is not, in
general, a model for the full configuration space: positive $g$-degree may
contribute additional cohomology classes.

The implementation retains three distinct levels of computation.  The full
graphical DGA is represented in a no-broken-circuit forest basis, with the
endpoint relations used to put the coefficient algebra into componentwise
normal form.  Exact differential matrices then compute all cohomology groups,
while multiplication by the polynomial classes gives the full multiplication
table of $A_G^{(n)}$.  For computations involving only $A_G^{(n)}$, the
program instead uses \eqref{eq:edge-algebra-in-rg} directly.  Finally, when
$G=K_r$, an independent reduced J-model for the ordinary configuration space
\cite{BerceanuMarklPapadima} supplies a check on both the cohomology groups and
the multiplication in $A_{K_r}^{(n)}$.  The reduced J-model is used for this
complete-graph comparison; it is not the model used for arbitrary $G$.

\section{The five-vertex computation}\label{sec:five-vertex-computation}

We first use the model to measure how much information the functor retains on
a complete finite census.  We computed $A_G^{(3)}$ for one representative of
each of the $34$ graph classes on five vertices.
Proposition~\ref{prop:polynomial-sector} gives each
algebra by closing the classes $a_e,b_e\in R_G^{(3)}$ under multiplication;
disconnected graphs were combined by tensor product using
Proposition~\ref{prop:disjoint-unions}.  The implementation
checks this calculation against the full graphical DGA on small examples and
against the reduced J-model for complete graphs.  Algebra comparisons forget
both the graph labels and the distinguished edge generators.

In ambient dimension three, the diagonal differential for an edge $e=uv$
gives
\[
  h_u^2+h_uh_v+h_v^2=0
\]
in cohomology.  Consequently,
\[
  (a_e)^2=(h_u+h_v)^2=h_uh_v=b_e,
\]
so $A_G^{(3)}$ is generated in cohomological degree two.  Regrading this as
ordinary degree one, let
\[
  \widetilde A_G^k=(A_G^{(3)})^{2k}.
\]
The multiplication table determines the canonical presentation
\begin{equation}\label{eq:canonical-presentation}
  \widetilde A_G
  \cong
  \operatorname{Sym}\bigl(\widetilde A_G^1\bigr)/I_G,
  \qquad
  (I_G)_k
  =\ker\!\left(
    \operatorname{Sym}^k\bigl(\widetilde A_G^1\bigr)
    \longrightarrow \widetilde A_G^k
  \right).
\end{equation}
This step is important: the polynomial ring in
\eqref{eq:canonical-presentation} is built intrinsically from the vector space
\(\widetilde A_G^1\), rather than from variables labeled by the edges of \(G\).

To compare the presentations intrinsically, we used the Hilbert series, the
socle Hilbert function, and the graded Betti table of the minimal free
resolution.  We also used the power-zero schemes
\[
  Z_p(A)=\{u\in A^1:u^p=0\}\subseteq A^1.
\]
Concretely, after choosing a basis $x_1,\ldots,x_d$ of $A^1$, we wrote
$u=t_1x_1+\cdots+t_dx_d$, reduced $u^p$ modulo $I_G$, and set all of its
coefficients equal to zero.  This produces a homogeneous ideal
$J_p\subseteq\Q[t_1,\ldots,t_d]$ defining $Z_p(A)$.  Although coordinates are
used to perform the calculation, a graded algebra isomorphism carries
$Z_p(A)$ linearly onto the corresponding scheme for the other algebra.
Therefore its dimension, degree, and graded Betti table are invariants of the
abstract graded algebra.

\begin{theorem}[Five-vertex classification]
\label{thm:five-vertex-classification}
As $G$ ranges over the $34$ isomorphism classes of simple graphs on five
vertices, the algebras $A_G^{(3)}$ assume exactly $33$ abstract graded-algebra
isomorphism types.  If $F$ and $G$ are nonisomorphic, then
\(A_F^{(3)}\cong A_G^{(3)}\) if and only if
\[
  \{F,G\}
  =\{P_3\sqcup 2K_1,\,2K_2\sqcup K_1\}.
\]
For this exceptional pair, both algebras are isomorphic to
\[
  \Q[x,y]/(x^3,y^3),
  \qquad |x|=|y|=2.
\]
\end{theorem}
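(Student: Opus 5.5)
The proof is a finite, exact computation organized in three stages. \emph{Reduction.} By Proposition~\ref{prop:disjoint-unions}, isolated vertices can be discarded and disconnected graphs handled by tensor products. Equation~\eqref{eq:edge-algebra-in-rg} lets us compute $A_G^{(3)}$ inside the finite quotient $R_G^{(3)}$. Because $(a_e)^2=b_e$ when $n=3$, the algebra is generated by the linear span of the $a_e$. For each of the $34$ representatives we then form $\widetilde A_G^1$, compute the Hilbert function of $\widetilde A_G$ degree by degree (it vanishes above total degree $2\cdot 5$ since $h_v^3=0$), and extract the canonical ideal $I_G$ of \eqref{eq:canonical-presentation} by linear algebra over $\Q$.

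\emph{Separation.} Next we show that the $33$ claimed types are pairwise nonisomorphic, using only invariants of the abstract graded algebra. The coarse sort uses the Hilbert series and $\dim\widetilde A_G^1$; we expect $\dim\widetilde A_G^1$ to be the rank of the span of the $h_u+h_v$ modulo relations, roughly $|E|$ minus cycle corrections. Within each Hilbert-series bucket we refine in turn by:
\begin{itemize}
\item the socle Hilbert function;
\item the graded Betti table of $I_G$;
\item the dimension, degree and Betti tables of the power-zero schemes $Z_p$ for $p=2,3,\dots$.
\end{itemize}
The remark preceding the theorem already shows these are invariants. The plan is to exhibit a table in which every bucket other than the exceptional one is split by some invariant. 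The step I expect to be the main obstacle is a pair of graphs that agree on Hilbert series, socle and Betti data. I would attack such a pair first through $Z_2$ and $Z_3$, for instance by counting the components of $Z_p$ and their dimensions. If that fails, I would decide isomorphism directly by solving for an invertible linear map $\widetilde A^1_F\to\widetilde A^1_G$ carrying $I_F$ into $I_G$, using a Gröbner computation over $\Q$. Nonexistence would then be certified by the ideal of that system containing $1$ (in the determinant-inverted coordinates).

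\emph{The exceptional pair.} Finally we verify the isomorphism directly. For $2K_2\sqcup K_1$, Proposition~\ref{prop:disjoint-unions} gives $A\cong A_{K_2}^{(3)}\otimes A_{K_2}^{(3)}\cong\Q[z]/(z^3)^{\otimes 2}=\Q[x,y]/(x^3,y^3)$. For $P_3\sqcup 2K_1$, isolated vertices are irrelevant and the computation of $A_{P_3}^{(3)}$ recorded in Section~\ref{subsec:deletion-contraction} gives the same algebra. This establishes the ``if'' direction. The ``only if'' direction, together with the count of $33$ types, follows from the separation table: $34$ classes with exactly one collision yield $33$ types. The result is a computer-assisted proof, so reproducibility (exact rational arithmetic, cross-checks against the full DGA and the J-model for $K_5$) should be stated alongside the table.
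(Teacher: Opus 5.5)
This is essentially the paper's own computational proof. It computes each $A_G^{(3)}$ inside $R_G^{(3)}$ through its canonical presentation and sorts the $34$ classes by Hilbert series, which take $23$ values with seven nonsingleton fibers; socle Hilbert functions and the degrees and Betti tables of the power-zero schemes $Z_p$ then separate every nonexceptional fiber, so your Gr\"obner fallback is never needed. The only difference is that you certify the exceptional pair structurally, via Proposition~\ref{prop:disjoint-unions} and $A_{P_3}^{(3)}\cong\Q[x,y]/(x^3,y^3)$, whereas the paper notes that the two canonical presentations have the same relation space; both are valid.
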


\begin{proof}[Computational proof]
We enumerated the $2^{\binom{5}{2}}=1024$ labeled edge sets and minimized each
under all $120$ elements of $S_5$.  This gives $34$ canonical orbit
representatives.  For every representative, exact rational arithmetic gives
the multiplication table and hence the intrinsic presentation
\eqref{eq:canonical-presentation}.

The resulting Hilbert series assume $23$ values.  Sixteen Hilbert fibers are
singletons.  The remaining seven fibers contain $18$ graphs and were resolved
as shown below.  In the second column we record the Hilbert vector
\((\dim \widetilde A_G^k)_k\); \(\beta(Z_p)\) denotes the graded Betti table of
the homogeneous coordinate ring of \(Z_p\).
\begin{center}
\scriptsize
\begin{tabular}{ccccc}
\hline
\(|E(G)|\) & Hilbert vector & graphs & algebra types & separating data\\
\hline
2 & $(1,2,3,2,1)$ & 2 & 1 & identical presentations\\
3 & $(1,3,6,6,4,1)$ & 2 & 2 & $\beta(Z_3)$ differs\\
4 & $(1,4,10,13,11,5,1)$ & 3 & 3 & $\deg Z_5=14,12,10$\\
5 & $(1,5,10,10,5,1)$ & 4 & 4 & socle, $\deg Z_4$, and $\beta(Z_2)$\\
6 & $(1,5,9,7,2)$ & 3 & 3 & socle and $\beta(Z_2)$\\
7 & $(1,5,8,5,1)$ & 2 & 2 & socle Hilbert functions differ\\
7 & $(1,5,8,4)$ & 2 & 2 & $\deg Z_2=18,16$\\
\hline
\end{tabular}
\end{center}
Different entries among these intrinsic quantities obstruct a graded-algebra
isomorphism.  In the one remaining two-graph group, reducing the multiplication
tables to the presentations \eqref{eq:canonical-presentation} gives the same
relation space in the displayed bases, which supplies the explicit
isomorphism with \(\Q[x,y]/(x^3,y^3)\).  Thus the seven nonsingleton Hilbert
fibers contain $17$ algebra types.  Together with the $16$ singleton fibers,
this proves the asserted count and identifies the unique collision.
\end{proof}

The Python implementation uses exact rational arithmetic, SageMath~10.9 for
larger linear-algebra calculations, and Macaulay2~1.26.06 for minimal free
resolutions and power-zero ideals.  On the development machine, regenerating
all $34$ multiplication tables, presentations, and comparisons took about
$33$ seconds.  The source, regeneration command, and deterministic certificate
\path{results/r5_n3_classification.json} are available in the
\href{https://github.com/jacksonwalters/graph-functor}{\texttt{graph-functor}}
repository.  Release~\texttt{v1.0.3} is archived on Zenodo
\cite{WaltersGraphFunctorSoftware}.
The classification is exhaustive for $(r,n)=(5,3)$ only.
The same repository contains \path{tree_cubic_invariant.py}, which implements
the direct construction of the cubic tree invariant in
Proposition~\ref{prop:tree-cubic-polynomial-size}.  Regression tests compare
its output with the full-algebra presentation on small trees and verify
equivariance under vertex relabeling.

\subsection{Comparison with chromatic and Tutte polynomials}

Write
\[
  \mathsf h_n(G;t)=\operatorname{Hilb}(A_G^{(n)};t)
  =\sum_{k\geq0}\dim_{\Q}(A_G^{(n)})^k t^k.
\]
For two graph invariants $I$ and $J$, we say that $I$ \emph{refines} $J$ if
$I(F)=I(G)$ implies $J(F)=J(G)$.  The five-vertex computation distinguishes
sharply between the Hilbert-series shadow and the full algebra type.

\begin{proposition}\label{prop:classical-incomparability}
The invariant $\mathsf h_4(-;t)$ is incomparable with both the chromatic
polynomial and the Tutte polynomial.  On the other hand, when restricted to
simple graphs on five vertices, the abstract algebra type $[A_G^{(3)}]$
strictly refines both classical polynomials.
\end{proposition}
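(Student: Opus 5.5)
The statement has two halves, and I would prove each by exhibiting explicit pairs of graphs together with one general observation.

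\emph{Incomparability of $\mathsf h_4$.} Two invariants are incomparable when neither refines the other. So I need four witness pairs:
\begin{itemize}
\item[(i)] graphs with equal $\mathsf h_4$ but different chromatic polynomials;
\item[(ii)] graphs with equal chromatic polynomials but different $\mathsf h_4$;
\item[(iii)] and (iv), the same two kinds of pair for the Tutte polynomial.
\end{itemize}

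For (i) and (iii), the natural source is Theorem~\ref{thm:tree-a-hilbert}, which says the Hilbert series of a tree depends only on $|V(T)|$ and $n$. By Proposition~\ref{prop:disjoint-unions}, adjoining isolated vertices does not change $A$, so the tree $P_3\sqcup K_1$ and the tree $K_{1,3}$ have the same $\mathsf h_4$. A cleaner choice is the pair $K_2\sqcup K_1$ versus $K_1\sqcup K_1\sqcup K_1$ padded appropriately. I would prefer a pair that keeps $\mathsf h_4$ fixed while changing the edge count, since the number of edges is read off both classical polynomials.

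If the tree theorem is not yet available at this point in the paper, I would instead compute directly in $R_G^{(4)}$ using \eqref{eq:edge-algebra-in-rg}. For example, $2K_2$ and $P_3\sqcup K_1$ could be compared. Their chromatic and Tutte polynomials differ, because $P_3\sqcup K_1$ is connected plus an isolated vertex while $2K_2$ has two nontrivial components, so the coefficient patterns differ. I would then check that their Hilbert series agree, mirroring the $n=3$ collision in Theorem~\ref{thm:five-vertex-classification}.

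For (ii) and (iv), I would take two nonisomorphic trees on the same number of vertices whose Tutte polynomials are both $x^{m}$. Trees cannot work here, however, since their Hilbert series also agree. So I need non-tree pairs with equal Tutte polynomial, for instance two graphs obtained by gluing the same blocks differently (a triangle with a pendant edge at different places, or nonisomorphic graphs with equal cycle matroid structure). For these I would compute $\mathsf h_4$ in $R_G^{(4)}$ and show the series differ. Equal Tutte polynomial implies equal chromatic polynomial, so one such pair serves for both (ii) and (iv).

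\emph{Refinement on five vertices.} I would use Theorem~\ref{thm:five-vertex-classification}. The only nonisomorphic pair with isomorphic algebras is $\{P_3\sqcup 2K_1,\,2K_2\sqcup K_1\}$. Both are forests with two edges on five vertices, so both have Tutte polynomial $x^2$ and chromatic polynomial $q^3(q-1)^2$. Hence equal algebra type implies equal Tutte polynomial, and therefore equal chromatic polynomial.

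For strictness, it suffices to exhibit one five-vertex pair with equal Tutte polynomial but different algebra type. Any two nonisomorphic five-vertex trees work, say $P_5$ and $K_{1,4}$: they share the Tutte polynomial $x^4$, and the classification says they have distinct algebra types.

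\emph{Main obstacle.} The hardest step will be (ii)/(iv). I must find a small non-tree pair with equal Tutte polynomial but different $\mathsf h_4$, and certify the difference by an exact Hilbert-function computation in $R_G^{(4)}$. The first thing I would try is the pair of triangle-plus-pendant-path graphs on five vertices that differ in where the path attaches.
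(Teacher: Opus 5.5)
Your five-vertex refinement argument is correct and follows the paper: the unique nontrivial fiber is the two-edge forest pair, on which $P=q^3(q-1)^2$ and $T=x^2$. Exhibiting $P_5$ and $K_{1,4}$ for strictness is a valid alternative to the paper's count of $33$ algebra types against $23$ and $25$ polynomial values.

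The incomparability half has a genuine gap, because every concrete witness you propose is wrong.
\begin{itemize}
\item $P_3\sqcup K_1$ is not a four-vertex tree. By Proposition~\ref{prop:disjoint-unions} its algebra is $A_{P_3}^{(4)}$, of dimension $4\cdot 3^2-\binom{4}{2}=30$, while $\dim A_{K_{1,3}}^{(4)}=4\cdot 3^3-\binom{4}{2}=102$.
\item Likewise $A_{K_2\sqcup K_1}=A_{K_2}\neq\Q=A_{3K_1}$.
\item Most importantly, $2K_2$ and $P_3\sqcup K_1$ do \emph{not} have different classical polynomials. Both are four-vertex forests with two edges, so both have $P=q^2(q-1)^2$ and $T=x^2$.
\item Nor do their Hilbert series agree. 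In $s=t^2$, Theorem~\ref{thm:tree-a-hilbert} and the tensor product give $\mathsf h_4(P_3\sqcup K_1)=1+2s+5s^2+6s^3+7s^4+5s^5+3s^6+s^7$ but $\mathsf h_4(2K_2)=\bigl(\qbinom{4}{2}_s\bigr)^2$. These have dimensions $30$ and $36$, so the $n=3$ coincidence does not persist at $n=4$.
\end{itemize}

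So the pair you assigned to (i)/(iii) is precisely a witness for (ii)/(iv). This is what the paper uses, padded to five vertices as $P_3\sqcup 2K_1$ and $2K_2\sqcup K_1$. Your remark that trees cannot work overlooked forests with different component structure. Your replacement search among triangle-plus-pendant graphs is therefore unnecessary, and nothing in the paper certifies that those graphs have distinct $\mathsf h_4$.

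What is then genuinely missing is (iii): a pair with equal $\mathsf h_4$ but different Tutte polynomials. No forest can supply it. For a forest the classes $a_e$ are linearly independent in $H^2$, so $\mathsf h_4$ records $|E|$, while $T=x^{|E|}$. For the chromatic polynomial alone, $G$ versus $G\sqcup K_1$ is a trivial witness, since the algebra is unchanged and $P$ is multiplied by $q$. But $T$ ignores isolated vertices, and the same example shows your inference ``equal Tutte implies equal chromatic'' needs equal vertex counts.

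The paper closes this direction with a computed non-forest coincidence. Let $D$ be a triangle with two leaves attached at one vertex. Then $\mathsf h_4(D)=\mathsf h_4(C_5)$, while $T_D=x^4+x^3+x^2y\neq x^4+x^3+x^2+x+y=T_{C_5}$ and $P_D\neq P_{C_5}$. Some such explicit computation is indispensable, and your proposal does not contain a correct one.
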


\begin{proof}
Let
\[
  F=P_3\sqcup2K_1,
  \qquad
  M=2K_2\sqcup K_1.
\]
Both are forests with two edges, and direct calculation gives
\[
  P_F(q)=P_M(q)=q^3(q-1)^2,
  \qquad
  T_F(x,y)=T_M(x,y)=x^2.
\]
Their ambient-dimension-four Hilbert series are different:
\begin{align*}
  \mathsf h_4(F;t)
  &={}
  1+2t^2+5t^4+6t^6+7t^8+5t^{10}+3t^{12}+t^{14},\\
  \mathsf h_4(M;t)
  &={}
  1+2t^2+5t^4+6t^6+8t^8+6t^{10}+5t^{12}+2t^{14}+t^{16}.
\end{align*}
Thus neither classical polynomial determines $\mathsf h_4$.

For the reverse direction, let $D$ be the graph obtained from a triangle by
attaching two leaves to the same triangle vertex.  Then
\[
  \mathsf h_4(D;t)=\mathsf h_4(C_5;t),
\]
while
\begin{align*}
  P_D(q)&=q(q-1)^3(q-2),
  &T_D(x,y)&=x^4+x^3+x^2y,\\
  P_{C_5}(q)&=(q-1)^5-(q-1),
  &T_{C_5}(x,y)&=x^4+x^3+x^2+x+y.
\end{align*}
Hence $\mathsf h_4$ determines neither classical polynomial.

Finally, Theorem~\ref{thm:five-vertex-classification} shows that the only
nontrivial fiber of $G\mapsto[A_G^{(3)}]$ on five-vertex graph classes is
$\{F,M\}$.  The displayed formulas show that both classical polynomials are
constant on this fiber.  The algebra type therefore refines each classical
polynomial on this census.  The refinement is strict because there are $33$
algebra types, but only $23$ chromatic-polynomial values and $25$
Tutte-polynomial values.
\end{proof}

For completeness, the full five-vertex census is summarized below.  A
\emph{colliding pair} is an unordered pair of nonisomorphic graphs on which
the indicated invariant agrees.
\begin{center}
\small
\begin{tabular}{lccc}
\hline
invariant & distinct values & colliding pairs & largest fiber\\
\hline
$P_G(q)$ & 23 & 15 & 3\\
$T_G(x,y)$ & 25 & 12 & 3\\
$\mathsf h_3(G;t)$ & 23 & 16 & 4\\
$\mathsf h_4(G;t)$ & 24 & 15 & 4\\
\hline
\end{tabular}
\end{center}
The joint invariants $(\mathsf h_4,P_G)$ and $(\mathsf h_4,T_G)$ each assume
$28$ values on the same $34$ graph classes.  The accompanying certificate
\path{results/r5_classical_comparison.json} records every graph, polynomial,
Hilbert series, collision fiber, and witness used in this comparison.

\begin{remark}
The reverse witness concerns only the Hilbert series:
Theorem~\ref{thm:five-vertex-classification} separates $D$ and $C_5$ as
abstract algebras.  It remains open whether $[A_G^{(n)}]$ determines either
classical polynomial for arbitrary graphs.
\end{remark}

\section{Tree families}\label{sec:tree-families}

The five-vertex census shows that multiplication can distinguish graphs whose
Hilbert series agree.  Trees isolate this contrast in a uniform family: neither
their chromatic nor their Tutte polynomial depends on shape.  Every tree $T$
on $r$ vertices has
\[
  P_T(q)=q(q-1)^{r-1},
  \qquad
  T_T(x,y)=x^{r-1}.
\]
Complete tree invariants are known.  Tree-specific polynomial constructions
of Chaudhary--Gordon and Liu recover rooted or unrooted trees
\cite{ChaudharyGordon,LiuTreePolynomial}, and Nishimura's second Kneser
chromatic function is also complete on trees \cite{NishimuraKneser}.  By
contrast, whether Stanley's chromatic symmetric function determines trees
remains open; for trees this is equivalent to the corresponding question for
the $U$-polynomial \cite{StanleyChromaticSymmetric,NobleWelsh,AlisteDeMierZamora}.

A closer topological precedent is Hoekstra--Mendoza's reconstruction of $T$
from the interaction complex governing the exterior face algebra
$H^*(UD_4(T))$ \cite{HoekstraMendoza}.  There the tree is the space on which
particles move; here it indexes forbidden diagonals and edge-span bundles in
projective space.  The result below is distinctive because the functorial
Chern algebra has a shape-independent Hilbert series, while its multiplication
still reconstructs the tree.

It is helpful first to isolate the part of the graphical model involving only
the projective hyperplane classes.  For a tree $T$, set
\[
  R_T^{(n)}=
  \frac{\Q[h_v:v\in V(T)]}
  {\bigl(h_v^n,\Delta_{uv}:uv\in E(T)\bigr)},
  \qquad
  \Delta_{uv}=\sum_{k=0}^{n-1}h_u^{n-1-k}h_v^k.
\]
Write $s=t^2$, so that $s$ records half the cohomological degree, and put
$[m]_s=1+s+\cdots+s^{m-1}$.

\begin{proposition}\label{prop:tree-r-hilbert}
If $T$ is a tree on $r$ vertices, then
\begin{equation}\label{eq:tree-r-hilbert}
  \operatorname{Hilb}\bigl(R_T^{(n)};s\bigr)
  =[n]_s[n-1]_s^{r-1}.
\end{equation}
\end{proposition}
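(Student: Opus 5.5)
Induct on $r$ by removing a leaf. For $r=1$, $R_T^{(n)}=\Q[h]/(h^n)$ has Hilbert series $[n]_s$. For $r\geq2$, choose a leaf $w$ with unique neighbour $u$, and let $T'=T\setminus w$. Since $w$ lies on no other edge, there is a presentation
\[
  R_T^{(n)}\cong R_{T'}^{(n)}[h_w]\big/\bigl(h_w^n,\ \Delta_{uw}\bigr).
\]
So it suffices to show that this extension multiplies the Hilbert series by $[n-1]_s$. Concretely, I will show that $R_T^{(n)}$ is a free $R_{T'}^{(n)}$-module with basis $1,h_w,\ldots,h_w^{n-2}$.

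The key step uses the identity $(h_w-h_u)\Delta_{uw}=h_w^n-h_u^n$. This shows $h_w^n\equiv h_u^n \pmod{\Delta_{uw}}$. Since $h_u^n=0$ already holds in $R_{T'}^{(n)}$, the relation $h_w^n$ lies in the ideal generated by $\Delta_{uw}$ over $R_{T'}^{(n)}$. Hence
\[
  R_T^{(n)}\cong R_{T'}^{(n)}[h_w]/(\Delta_{uw}).
\]
Now $\Delta_{uw}=h_w^{n-1}+h_uh_w^{n-2}+\cdots+h_u^{n-1}$ is monic in $h_w$ of degree $n-1$ with coefficients in $R_{T'}^{(n)}$. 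Division by a monic polynomial works over any commutative ring, so the quotient is free over $R_{T'}^{(n)}$ on $1,h_w,\ldots,h_w^{n-2}$. These basis elements have degrees $0,1,\ldots,n-2$ in the $s$-grading, because $|h_w|=2$ and $s=t^2$.

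Freeness with a homogeneous basis gives
\[
  \operatorname{Hilb}(R_T^{(n)};s)=[n-1]_s\cdot\operatorname{Hilb}(R_{T'}^{(n)};s),
\]
and the induction then closes to give \eqref{eq:tree-r-hilbert}. The only step I expect to need care is the presentation in the first paragraph: checking that adjoining the vertex $w$ and its edge adds only the relations $h_w^n$ and $\Delta_{uw}$, with nothing further imposed on $R_{T'}^{(n)}$. This is immediate from the definition of $R_T^{(n)}$ as a quotient of a polynomial ring by generators indexed by vertices and edges. The redundancy of $h_w^n$ is the one genuinely non-formal point, and the identity $(h_w-h_u)\Delta_{uw}=h_w^n-h_u^n$ settles it. The tree hypothesis is used only to guarantee that a leaf always exists, so that the reduction can be iterated.
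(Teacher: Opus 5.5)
Your proof is correct and complete, but it is organized differently from the paper's.

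\textbf{The paper's route.} The paper roots $T$ at $\rho$ and uses the same identity $h_v^n-h_{p(v)}^n=(h_v-h_{p(v)})\Delta_{v,p(v)}$ to discard every $h_v^n$ with $v\neq\rho$. It then takes a lexicographic order in which each child variable is larger than its parent. The leading monomials $h_\rho^n$ and $h_v^{n-1}$ are pairwise coprime, so the remaining generators form a Gr\"obner basis, and counting standard monomials gives the formula.

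\textbf{Your route.} You peel off one leaf at a time and use only the division algorithm by a monic homogeneous polynomial. This is more elementary, since no Buchberger-type criterion is needed. It also proves a slightly stronger structural fact: $R_T^{(n)}$ is a graded free $R_{T'}^{(n)}$-module on $1,h_w,\dots,h_w^{n-2}$, so in particular $R_{T'}^{(n)}\to R_T^{(n)}$ is injective.

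\textbf{What the paper's packaging buys.} It gives a single global normal form and reduction procedure. The paper reuses this in the proof of Theorem~\ref{thm:tree-reconstruction}, to show $(h_p+h_q)^3\neq0$ and to compute the cubic relation table.

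\textbf{Why the two agree.} They are really the same computation. Peel leaves so that a fixed root $\rho$ is the last vertex remaining. Then your iterated basis is exactly the paper's set of standard monomials: exponent of $h_\rho$ below $n$, all other exponents below $n-1$. Each of your division steps is a reduction by the leading term $h_w^{n-1}$ of $\Delta_{uw}$.
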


\begin{proof}
Root $T$ at a vertex $\rho$, and write $p(v)$ for the parent of a nonroot
vertex $v$.  The ideal defining $R_T^{(n)}$ is generated by
\[
  h_\rho^n,
  \qquad
  \Delta_{v,p(v)}\quad(v\ne\rho).
\]
Indeed, the identity
\[
  h_v^n-h_{p(v)}^n
  =(h_v-h_{p(v)})\Delta_{v,p(v)}
\]
recovers all the omitted relations $h_v^n$ inductively from the root
relation.  Choose a lexicographic order in which every child variable is
larger than its parent.  The leading monomials are then
\[
  h_\rho^n,
  \qquad
  h_v^{n-1}\quad(v\ne\rho).
\]
They are pairwise relatively prime, so the displayed generators form a
Gr\"obner basis.  The standard monomials have root exponent less than $n$
and every other exponent less than $n-1$, which proves
\eqref{eq:tree-r-hilbert}.
\end{proof}

The passage from $R_T^{(n)}$ to $A_T^{(n)}$ is controlled by a conductor
calculation that works more generally for connected bipartite graphs.  Before
imposing the cohomological relations, put
\[
  P_G=\Q[h_v:v\in V(G)],
  \qquad
  S_G=\Q[h_u+h_v,h_uh_v:uv\in E(G)]\subseteq P_G.
\]

\begin{lemma}[Bipartite conductor lemma]\label{lem:bipartite-conductor}
Let $G$ be a connected bipartite graph with at least one edge and color
classes $V_0,V_1$.  Define
\[
  \pi_G:P_G\longrightarrow\Q[x,y],
  \qquad
  h_v\longmapsto
  \begin{cases}
    x,&v\in V_0,\\
    y,&v\in V_1.
  \end{cases}
\]
Then
\begin{equation}\label{eq:polynomial-bipartite-preimage}
  S_G=\pi_G^{-1}\bigl(\Q[x+y,xy]\bigr).
\end{equation}
\end{lemma}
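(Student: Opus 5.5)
The inclusion $S_G\subseteq\pi_G^{-1}(\Q[x+y,xy])$ is immediate, and the reverse inclusion reduces to showing that $\ker\pi_G$ is contained in $S_G$. For the easy direction: every edge $uv$ joins $V_0$ to $V_1$, so $\pi_G(h_u+h_v)=x+y$ and $\pi_G(h_uh_v)=xy$. Hence $\pi_G$ carries the generators, and therefore all of $S_G$, into $\Q[x+y,xy]$.

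For the reverse inclusion, put $I=\ker\pi_G$. Since $G$ has an edge, $\pi_G(S_G)=\Q[x+y,xy]$. So it suffices to prove $I\subseteq S_G$: if $\pi_G(f)$ is symmetric, choose $s\in S_G$ with $\pi_G(s)=\pi_G(f)$, and then $f-s\in I\subseteq S_G$.

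Because $G$ is connected and bipartite, any two vertices of the same color are joined by a path of even length. Therefore $I$ is generated, as an ideal of $P_G$, by the set $D$ of differences $d=h_p-h_q$ with $p,q$ having a common neighbor $v$. Each such $d$ lies in $S_G$, since
\[
  h_p-h_q=a_{pv}-a_{qv}.
\]

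Next I would establish two local facts for $d=h_p-h_q$ with common neighbor $v$.

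\emph{First fact.} Using $(h_p-h_v)^2=a_{pv}^2-4b_{pv}\in S_G$ and the analogous element for $q$, I get
\[
  (h_p-h_v)^2-(h_q-h_v)^2=d\,(h_p+h_q-2h_v)\in S_G .
\]
I also have $d\,h_v=b_{pv}-b_{qv}\in S_G$, so $d(h_p+h_q)\in S_G$. Combined with $d\cdot d\in S_G$, this gives
\[
  h_pd=\tfrac12\bigl(d(h_p+h_q)+d^2\bigr)\in S_G .
\]

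\emph{Second fact.} For every vertex $z$, the product $h_zd$ lies in $S_G+I_{\ge2}D$, where the error term has strictly lower degree in $I$. To see this, move $h_z$ to $h_p$ modulo $I$ if $z$ has the color of $p$. If $z$ has the other color, first use $h_z=a_{zz'}-h_{z'}$ for a neighbor $z'$ of $z$; here $a_{zz'}d\in S_G$.

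With these facts, I would prove $I_k\subseteq S_G$ by induction on the polynomial degree $k$. The key decomposition is
\[
  P_G=S_G+S_G\,h_{u_0}+I,
\]
valid for any fixed vertex $u_0$. It holds because $\Q[x,y]$ is generated as a $\Q[x+y,xy]$-module by $1$ and $x$, and one may take $u_0\in V_0$.

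Then $I_k=(P_G)_{k-1}D$ splits into three pieces:
\begin{itemize}
  \item $S_G\cdot D\subseteq S_G$, since $D\subseteq S_G$ and $S_G$ is a ring;
  \item $I_{k-1}\cdot D\subseteq S_G\cdot S_G\subseteq S_G$, by the inductive hypothesis;
  \item $S_G\,h_{u_0}D$, which lies in $S_G$ by the two local facts.
\end{itemize}
The error terms in the second local fact are products of two elements of $I$, namely $(h_z-h_p)\,d$. These are handled either by the inductive hypothesis or directly, since $D\cdot D\subseteq S_G$ and $S_G$ is closed under multiplication. The base case is degree $1$: $I_1$ is spanned by $D$ modulo connectivity, so $I_1\subseteq S_G$.

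I expect the main obstacle to be the bookkeeping in the step $h_{u_0}d\in S_G$ when $u_0$ is far from the pair $p,q$, or of the opposite color. The plan is to reduce to $h_pd$ through the decomposition $h_{u_0}=h_p+(h_{u_0}-h_p)$, or through $h_{u_0}=a_{u_0u'}-h_{u'}$, and then apply the squared-difference identity above. One must check that every residual product of two kernel elements really lands in $S_G$ through the induction rather than circularly.
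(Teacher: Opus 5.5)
Your argument is correct. It shares the paper's skeleton: reduce to $\ker\pi_G\subseteq S_G$, generate the kernel by differences $d=h_p-h_q$ across a common neighbor, and show that $d$ and $h\,d$ lie in $S_G$. The difference lies in how you finish.

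The paper proves the exact decomposition $P_G=S_G+S_Gh_u$ inside $P_G$. Path sums give $P_G=S_G[h_u]$, and the monic relation $h_u^2-a_{uw}h_u+b_{uw}=0$ cuts this down to rank two. With that in hand, $d\in S_G$ and $h_pd\in S_G$ immediately give $dP_G\subseteq S_G$. So every such $d$ lies in the conductor, which is an ideal of $P_G$ contained in $S_G$, and no induction is needed.

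You instead lift the rank-two structure of $\Q[x,y]$ over $\Q[x+y,xy]$. That only yields $P_G=S_G+S_Gh_{u_0}+\ker\pi_G$. The extra kernel term is what forces your induction on degree. It is also why you need the second local fact, which makes $h_{u_0}d\in S_G$ for one fixed $u_0$ and arbitrary $d$. Your route gets the decomposition more cheaply, from the symmetric-polynomial fact alone, but pays with the induction. The paper's route buys the stronger structural statement that $\ker\pi_G$ lies in the conductor.

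A few tidying points:
\begin{itemize}
\item The induction is sound, but you should say explicitly that you take the decomposition degreewise. This is legitimate because $S_G$, $h_{u_0}$ and $\ker\pi_G$ are all homogeneous.
\item Your worry about circularity is unfounded. The term $(h_z-h_p)d$ is a product of two elements of $S_G$, since both factors are sums of elements of $D$. So the second fact gives $h_zd\in S_G$ outright, and the notation $I_{\ge2}D$ there should simply be dropped.
\item Your first fact can be shortened to $h_pd=a_{pv}d-(b_{pv}-b_{qv})$, which is exactly the paper's identity.
\end{itemize}
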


\begin{proof}
Fix a vertex $u$.  The edge sums along paths express every $h_v$ in terms of
$S_G$ and $h_u$, so $P_G=S_G[h_u]$.  If $uw$ is any edge incident to $u$,
then
\[
  h_u^2-(h_u+h_w)h_u+h_uh_w=0,
\]
and multiplying this monic relation by $h_u^{k-2}$ gives, for $k\geq2$,
\[
  h_u^k=(h_u+h_w)h_u^{k-1}-h_uh_w\,h_u^{k-2}.
\]
Both coefficients on the right belong to $S_G$, so induction on $k$ reduces
every power of $h_u$ to an $S_G$-linear combination of $1$ and $h_u$.
Together with $P_G=S_G[h_u]$, this gives
\begin{equation}\label{eq:polynomial-rank-two}
  P_G=S_G+S_Gh_u.
\end{equation}

Consider a path of length two, $u-w-v$, and set $\delta=h_u-h_v$.  The edge
generators give
\[
  \delta=(h_u+h_w)-(h_w+h_v)\in S_G,
\]
\[
  h_w\delta=h_uh_w-h_wh_v\in S_G,
  \qquad
  h_u\delta=(h_u+h_w)\delta-h_w\delta\in S_G.
\]
Equation \eqref{eq:polynomial-rank-two} therefore implies
$\delta P_G\subseteq S_G$.  In other words, every difference arising from a
length-two path belongs to the conductor
\[
  \mathfrak c_G=\{f\in P_G:fP_G\subseteq S_G\}.
\]
Any two vertices in the same color class are joined by an even path, and
their difference is a sum of such length-two differences.  Consequently,
\[
  J_G=\ker\pi_G
  =(h_u-h_v:u,v\in V_0\text{ or }u,v\in V_1)
  \subseteq\mathfrak c_G\subseteq S_G.
\]

Certainly $\pi_G(S_G)=\Q[x+y,xy]$.  Conversely, if
$\pi_G(f)=F(x+y,xy)$, choose one edge $ab$ and set
$g=F(h_a+h_b,h_ah_b)$.  Then $g\in S_G$ and
$f-g\in J_G\subseteq S_G$, proving
\eqref{eq:polynomial-bipartite-preimage}.
\end{proof}

For a connected bipartite graph, let
\[
  I_G^{(n)}=(h_v^n,\Delta_{uv}:v\in V(G),\ uv\in E(G))\subseteq P_G
\]
and let $R_G^{(n)}=P_G/I_G^{(n)}$.  The image of $S_G$ in this quotient is
exactly the edge-span algebra $A_G^{(n)}$ by
Proposition~\ref{prop:polynomial-sector}.  The collapse
satisfies
\[
  \pi_G(I_G^{(n)})
  =(x^n,y^n,\Delta(x,y))=I_{K_2}^{(n)},
\]
so it induces a surjection
\[
  \overline\pi_G:R_G^{(n)}\longrightarrow R_{K_2}^{(n)}.
\]

\begin{corollary}\label{cor:bipartite-preimage}
For every connected bipartite graph $G$ with at least one edge,
\begin{equation}\label{eq:tree-bipartite-preimage}
  A_G^{(n)}
  =\overline\pi_G^{-1}\bigl(A_{K_2}^{(n)}\bigr).
\end{equation}
\end{corollary}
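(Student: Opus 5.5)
The plan is to descend the polynomial-level identity \eqref{eq:polynomial-bipartite-preimage} of Lemma~\ref{lem:bipartite-conductor} through the quotient maps $P_G\to R_G^{(n)}$ and $\Q[x,y]\to R_{K_2}^{(n)}$.

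Write $q_G:P_G\to R_G^{(n)}$ and $q:\Q[x,y]\to R_{K_2}^{(n)}$ for the quotient maps. These satisfy $\overline\pi_G\circ q_G=q\circ\pi_G$. By Proposition~\ref{prop:polynomial-sector} and \eqref{eq:edge-algebra-in-rg}, we have $A_G^{(n)}=q_G(S_G)$ and $A_{K_2}^{(n)}=q(\Q[x+y,xy])$; for $K_2$ the two generators are exactly $x+y$ and $xy$.

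The inclusion $A_G^{(n)}\subseteq\overline\pi_G^{-1}(A_{K_2}^{(n)})$ is immediate. For $s\in S_G$, Lemma~\ref{lem:bipartite-conductor} gives $\pi_G(s)\in\Q[x+y,xy]$, and therefore $\overline\pi_G(q_G(s))=q(\pi_G(s))\in A_{K_2}^{(n)}$.

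For the reverse inclusion, take $\bar f=q_G(f)$ with $\overline\pi_G(\bar f)\in A_{K_2}^{(n)}$. Then $\pi_G(f)=F(x+y,xy)+k$ for some $F$ and some $k\in I_{K_2}^{(n)}$. The key step is to lift $k$ into $I_G^{(n)}$. Since $\pi_G(I_G^{(n)})=I_{K_2}^{(n)}$ was noted before the corollary, we can choose $\kappa\in I_G^{(n)}$ with $\pi_G(\kappa)=k$. To make this concrete, fix an edge $ab$ with $a\in V_0$ and $b\in V_1$. Writing $k=\alpha x^n+\beta y^n+\gamma\Delta(x,y)$, set $\kappa=\alpha(h_a,h_b)h_a^n+\beta(h_a,h_b)h_b^n+\gamma(h_a,h_b)\Delta_{ab}$, using the substitution $x\mapsto h_a$, $y\mapsto h_b$, which is a section of $\pi_G$.

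Now $\pi_G(f-\kappa)=F(x+y,xy)\in\Q[x+y,xy]$, so \eqref{eq:polynomial-bipartite-preimage} gives $f-\kappa\in S_G$. Hence $\bar f=q_G(f-\kappa)\in q_G(S_G)=A_G^{(n)}$.

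The main subtlety is exactly this lifting step: the preimage of an ideal quotient requires that the relation ideal downstairs be the full image of the one upstairs, not merely contained in it. Here that is guaranteed because $\pi_G$ has a ring section through a single edge, and that section carries the generators of $I_{K_2}^{(n)}$ to the generators $h_a^n,h_b^n,\Delta_{ab}$ of $I_G^{(n)}$. Everything else is formal diagram chasing.
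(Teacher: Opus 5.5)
Your proposal is correct and follows the paper's argument. It descends the polynomial preimage identity of Lemma~\ref{lem:bipartite-conductor} by lifting the $I_{K_2}^{(n)}$-error term into $I_G^{(n)}$ via $\pi_G(I_G^{(n)})=I_{K_2}^{(n)}$. The only differences are cosmetic: you cite \eqref{eq:polynomial-bipartite-preimage} directly where the paper subtracts an explicit lift $g\in S_G$ and uses $J_G\subseteq S_G$, and you make the ideal lift explicit through the edge section $x\mapsto h_a$, $y\mapsto h_b$.
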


\begin{proof}
One inclusion follows because every edge generator maps to $x+y$ or $xy$.
For the converse, suppose the class of $f\in P_G$ maps into
$A_{K_2}^{(n)}$.  There are $g\in S_G$ and $q\in I_{K_2}^{(n)}$ such that
$\pi_G(f)=\pi_G(g)+q$.  Since
$\pi_G(I_G^{(n)})=I_{K_2}^{(n)}$, choose $p\in I_G^{(n)}$ with
$\pi_G(p)=q$.  Then
\[
  f-g-p\in\ker\pi_G=J_G\subseteq S_G
\]
by Lemma~\ref{lem:bipartite-conductor}.  Thus
$f\in S_G+I_G^{(n)}$, as required.
\end{proof}

\begin{theorem}\label{thm:tree-a-hilbert}
Let $T$ be a tree on $r\geq2$ vertices.  Then
\begin{equation}\label{eq:tree-a-hilbert}
  \operatorname{Hilb}\bigl(A_T^{(n)};s\bigr)
  =[n]_s[n-1]_s^{r-1}-[n]_s[n-1]_s+\qbinom{n}{2}_s,
\end{equation}
where
\[
  \qbinom{n}{2}_s
  =\frac{(1-s^n)(1-s^{n-1})}{(1-s)(1-s^2)}.
\]
\end{theorem}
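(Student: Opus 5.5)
A tree is connected and bipartite, so Corollary~\ref{cor:bipartite-preimage} applies. It gives $A_T^{(n)}=\overline\pi_T^{-1}(A_{K_2}^{(n)})$ inside $R_T^{(n)}$, where $\overline\pi_T:R_T^{(n)}\to R_{K_2}^{(n)}$ is surjective. The plan is to compute the Hilbert series of the preimage by additivity. Since the preimage of a subspace under a surjection contains the kernel, and its image is exactly that subspace, we get degreewise
\[
  \operatorname{Hilb}(A_T^{(n)})
  =\operatorname{Hilb}(\ker\overline\pi_T)+\operatorname{Hilb}(A_{K_2}^{(n)})
  =\operatorname{Hilb}(R_T^{(n)})-\operatorname{Hilb}(R_{K_2}^{(n)})+\operatorname{Hilb}(A_{K_2}^{(n)}).
\]
This reduces the theorem to three Hilbert series.

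The first two terms come from Proposition~\ref{prop:tree-r-hilbert}. Applied to $T$, it gives $[n]_s[n-1]_s^{r-1}$. Since $K_2$ is itself a tree on two vertices, it also gives $\operatorname{Hilb}(R_{K_2}^{(n)})=[n]_s[n-1]_s$. This accounts for the first two summands of \eqref{eq:tree-a-hilbert}.

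The remaining step, which I expect to be the only real work, is to show $\operatorname{Hilb}(A_{K_2}^{(n)};s)=\qbinom{n}{2}_s$. Here $A_{K_2}^{(n)}$ is the image of $\Q[e_1,e_2]$, with $e_1=x+y$ and $e_2=xy$, in $\Q[x,y]/(x^n,y^n,\Delta(x,y))$. I would identify this image with $H^*(\Gr(2,n);\Q)$, which has Hilbert series $\qbinom{n}{2}_s$.

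The ideal $(x^n,y^n,\Delta)$ is generated by the complete homogeneous polynomials $h_{n-1}(x,y)=\Delta$ and $h_n(x,y)$. Indeed, $x^n=h_n-y\,h_{n-1}$ and $y^n=h_n-x\,h_{n-1}$, and conversely $h_n=x^n+y\,\Delta$. So $R_{K_2}^{(n)}=\Q[x,y]/(h_{n-1},h_n)$. Because $h_{n-1}$ and $h_n$ form a regular sequence, its Hilbert series is $[n-1]_s[n]_s$, consistent with the previous paragraph.

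Now $h_{n-1}$ and $h_n$ are symmetric, so the symmetric part $R^{S_2}$ is $\Q[e_1,e_2]/(h_{n-1},h_n)$. This uses the characteristic-zero fact that the Reynolds operator makes taking invariants commute with the quotient. That ring is the standard presentation of $H^*(\Gr(2,n))$, with Hilbert series $(1-s^{n-1})(1-s^n)/((1-s)(1-s^2))=\qbinom{n}{2}_s$, again because the relations form a regular sequence in $\Q[e_1,e_2]$.

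Finally, $A_{K_2}^{(n)}$ is the image of $\Q[e_1,e_2]$ in $R$. Since $(h_{n-1},h_n)$ is a symmetric ideal, the Reynolds projection of $\Q[x,y]$ onto $\Q[e_1,e_2]$ carries it onto $(h_{n-1},h_n)\cap\Q[e_1,e_2]$. Hence this image is $R^{S_2}$ and has the stated Hilbert series. Substituting the three series into the additivity formula gives \eqref{eq:tree-a-hilbert}.
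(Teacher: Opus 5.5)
Your proof is correct. Its skeleton matches the paper's: the paper also combines Corollary~\ref{cor:bipartite-preimage} with surjectivity to place $\ker\overline\pi_T$ in two exact sequences, and it also applies Proposition~\ref{prop:tree-r-hilbert} to both $T$ and $K_2$.

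The genuine difference is how you obtain $\operatorname{Hilb}(A_{K_2}^{(n)};s)=\qbinom{n}{2}_s$. The paper argues topologically. The span map exhibits $\Conf_{K_2}(\PP^{n-1})$ as a bundle over $\Gr(2,n)$ with fiber $\Conf_2(\PP^1)$, whose rational cohomology is $\Q[u]/(u^2)$. Leray--Hirsch, applied with the classes $1,h_1$, makes $H^*(\Gr(2,n);\Q)\to H^*(\Conf_{K_2}(\PP^{n-1});\Q)$ injective. Since $H^*(\Gr(2,n);\Q)$ is generated by the two Chern classes, the image is exactly $A_{K_2}^{(n)}$.

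You instead stay entirely inside $R_{K_2}^{(n)}$. You rewrite its ideal as $(h_{n-1},h_n)$ and use the Reynolds operator to identify the image of $\Q[e_1,e_2]$ with $(R_{K_2}^{(n)})^{S_2}\cong\Q[e_1,e_2]/(h_{n-1},h_n)$.

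What each route buys:
\begin{itemize}
\item Yours is purely commutative-algebraic and needs no check of the Leray--Hirsch hypothesis. It also gives the extra structural fact that $A_{K_2}^{(n)}$ is the swap-invariant part of $R_{K_2}^{(n)}$, with an explicit presentation.
\item The paper's explains the Gaussian binomial as injectivity of the span map on rational cohomology, a fact it reuses for \eqref{eq:k2-picture-edge-isomorphism}. Your count also recovers that fact: $H^*(\Gr(2,n);\Q)\to A_{K_2}^{(n)}$ is a surjection between graded spaces with equal Hilbert series.
\end{itemize}

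One step should be justified explicitly: the regularity of $h_{n-1},h_n$ in $\Q[e_1,e_2]$. It holds because the quotient embeds in the finite-dimensional ring $R_{K_2}^{(n)}$. Hence the two elements form a homogeneous system of parameters in a two-variable polynomial ring, and so a regular sequence. Alternatively, simply cite the Poincar\'e polynomial of $\Gr(2,n)$.
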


\begin{proof}
Let $K=\ker\overline\pi_T$.  Corollary~\ref{cor:bipartite-preimage}
identifies $A_T^{(n)}$ as the full preimage of $A_{K_2}^{(n)}$.  Moreover, the
restriction $A_T^{(n)}\to A_{K_2}^{(n)}$ is surjective: the two generators on
any edge of $T$ map to the two generators on $K_2$.  Hence the same kernel $K$
occurs in two short exact sequences of graded vector spaces,
\[
  0\longrightarrow K\longrightarrow R_T^{(n)}
  \longrightarrow R_{K_2}^{(n)}\longrightarrow0
\]
and
\[
  0\longrightarrow K\longrightarrow A_T^{(n)}
  \longrightarrow A_{K_2}^{(n)}\longrightarrow0.
\]
The first two Hilbert series are
\[
  \operatorname{Hilb}(R_T^{(n)};s)=[n]_s[n-1]_s^{r-1},
  \qquad
  \operatorname{Hilb}(R_{K_2}^{(n)};s)=[n]_s[n-1]_s
\]
by Proposition~\ref{prop:tree-r-hilbert}.  For one edge, the span map fits
into the fiber bundle
\[
  \Conf_2(\PP^1)\longrightarrow
  \Conf_{K_2}(\PP^{n-1})\longrightarrow\Gr(2,n).
\]
The projection of $\Conf_2(\PP^1)$ onto its first coordinate has contractible
fiber $\PP^1\setminus\{\mathrm{point}\}\cong\C$, so
$H^*(\Conf_2(\PP^1);\Q)\cong\Q[u]/(u^2)$.  The classes $1$ and $h_1$ restrict
to the basis $1,u$ of this fiber cohomology.  Leray--Hirsch therefore makes the
pullback from $H^*(\Gr(2,n);\Q)$ injective.  Since that ring is generated by
the two Chern classes, its image is
$A_{K_2}^{(n)}$, and therefore
\[
  \operatorname{Hilb}(A_{K_2}^{(n)};s)=\qbinom{n}{2}_s.
\]
Subtracting the two exact-sequence identities proves
\eqref{eq:tree-a-hilbert}.
\end{proof}

In particular,
\[
  \dim_{\Q}A_T^{(n)}=n(n-1)^{r-1}-\binom n2.
\]

Although this Hilbert series is independent of the shape of $T$, multiplication
determines the tree.  The following result uses only $A_T^{(3)}$; no variation
of the ambient dimension is required.  Its proof will show more precisely that
cubic multiplication already suffices.

\begin{theorem}[Tree reconstruction]\label{thm:tree-reconstruction}
For finite trees $T$ and $T'$,
\[
  A_T^{(3)}\cong A_{T'}^{(3)}
  \quad\text{as graded $\Q$-algebras}
  \qquad\Longleftrightarrow\qquad
  T\cong T'.
\]
\end{theorem}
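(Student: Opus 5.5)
The direction $T\cong T'\Rightarrow A_T^{(3)}\cong A_{T'}^{(3)}$ is immediate from Corollary~\ref{cor:relabeling-invariance}. For the converse I will extract from $A_T^{(3)}$ an intrinsic combinatorial structure that encodes the tree, using only the regraded algebra $\widetilde A_T$, its degree-one space $V_T=\widetilde A_T^1$, and the cubic relation space $K_T=(I_T)_3\subseteq\operatorname{Sym}^3V_T$ from \eqref{eq:canonical-presentation}.

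\emph{Step one: linear algebra in low degree.} Read off from Theorem~\ref{thm:tree-a-hilbert} with $n=3$ that $\dim V_T=r-1=|E(T)|$. Since the $a_e$ span $V_T$ (recall $b_e=a_e^2$), they form a basis of $V_T$. Next, compute the degree-two component to show that there are no quadratic relations, or else identify them exactly. In either case, all the structure to be used sits in $K_T$. All calculations will take place in $R_T^{(3)}$ by \eqref{eq:edge-algebra-in-rg}. The standard monomials from the proof of Proposition~\ref{prop:tree-r-hilbert} give an explicit basis: root exponent at most $2$ and every other exponent at most $1$.

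\emph{Step two: an intrinsic finite configuration of lines.} Consider the cubic power-zero scheme
\[
  Z_3(A_T^{(3)})=\{u\in V_T:u^3=0\},
\]
which is an isomorphism invariant and depends only on $K_T$. The elements $a_e$ lie in it, because $a_e$ pulls back from $A_{K_2}^{(3)}\cong\Q[z]/(z^3)$. The same holds for $a_e-a_f=h_u-h_v$ whenever $e=uw$ and $f=wv$ are adjacent. Indeed, $h_u$ and $h_v$ are both roots of $X^2+h_wX+h_w^2$, which gives $(h_u-h_v)(h_u+h_v+h_w)=0$ in $R_T^{(3)}$, and a short computation should then yield $(h_u-h_v)^3=0$. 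The target claim is that the reduced scheme $Z_3$ is exactly the union of the lines $\Q a_e$ ($e\in E$) and $\Q(a_e-a_f)$ ($e,f$ adjacent). To prove it, write $u=\sum t_ea_e$ and push $u^3$ into $R_T^{(3)}$. Expanding in the Gr\"obner normal form, I will show that vanishing forces the support of $u$ to be either a single edge or two adjacent edges with opposite coefficients. The coefficient of a monomial $h_xh_yh_z$ supported on three distinct non-root vertices should isolate products $t_et_ft_g$, and the local relations at a vertex should control the rest. This classification is the main obstacle. If it turns out to be false, for instance if differences $h_u-h_v$ at larger even distance also cube to zero, I will instead work with the full, possibly larger, set of lines and compare them with the positive roots of the type-$A$ systems attached to the two color classes.

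\emph{Step three: recovering the tree.} Given the line configuration $\mathcal L$, define a graph $\Gamma$ on $\mathcal L$ by placing an edge between $\ell$ and $\ell'$ when the plane $\ell+\ell'$ contains a third line of $\mathcal L$. The lines $\Q a_e$ are then intrinsically distinguished, for example by their number of such incidences or by a rank condition on $u^2$. With these in hand, two edge-lines are joined exactly when the edges share a vertex, so the line graph $L(T)$ is recovered. By Whitney's theorem, a connected graph is determined by its line graph except for the pair $\{K_3,K_{1,3}\}$; since $K_3$ is not a tree, $L(T)$ determines $T$. Small cases ($r\le 3$) I will check directly from Theorem~\ref{thm:five-vertex-classification} and the displayed algebras. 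Because every step uses only $(V_T,K_T)$, the argument also yields the cubic-invariant refinement stated later.
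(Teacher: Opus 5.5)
There is a genuine gap, and it sits at the point on which your Step three rests: the claim that $(a_e-a_f)^3=0$ for adjacent edges $e=uw$, $f=wv$ is false. Your identity $(h_u-h_v)(h_u+h_v+h_w)=0$ is correct, but carrying the computation through gives
\[
  (h_u-h_v)^3=-3h_uh_v(h_u-h_v)=-3h_w^2(h_u-h_v).
\]
This uses $h_u^3=h_v^3=0$ and $h_v^2+h_vh_w=-h_w^2$. Root $T$ at $w$. Then $h_w^2h_u$ and $h_w^2h_v$ are distinct standard monomials for the Gr\"obner basis of Proposition~\ref{prop:tree-r-hilbert}, so this class is nonzero. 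You can also see it functorially. Consider the homomorphism $T\to P_3$ sending the color class of $w$ to the middle vertex, $u$ to one endpoint, and the rest of the class of $u$ to the other endpoint. It carries $a_e-a_f$ to $x-y\in\Q[x,y]/(x^3,y^3)$, and $(x-y)^3=3(xy^2-x^2y)\neq0$.

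In fact the reduced cube-zero locus consists of the edge lines $[a_e]$ alone; this is what the paper proves. It first applies all such homomorphisms $T\to P_3$, which forces $u=C(h_p+h_q)$ with $p,q$ in opposite color classes. It then checks in normal form that $(h_p+h_q)^3$ retains the term $3h_p^2h_q$ unless $pq\in E(T)$. Your coefficient-isolation sketch does not yet amount to such an argument. Since the $a_e$ form a basis of $V_T$, no plane spanned by two of these lines contains a third. So your graph $\Gamma$ is edgeless for every tree, and Step three recovers no incidence at all. Your fallback only anticipates a larger line configuration, not this smaller one.

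The missing ingredient is a multiplicative test for adjacency between two recovered edge lines, since the linear configuration of $Z_3$ carries none. The paper finds it in the cubic kernel on triples. For three edge lines spanning $U$, it sets $\kappa(U)=\dim\ker\bigl(\operatorname{Sym}^3U\to\widetilde A_T^3\bigr)$.

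Suppose the three edges form a connected subtree $S$, necessarily $K_{1,3}$ or $P_4$. Then $S$ is a graph retract of $T$, so $A_S^{(3)}\to A_T^{(3)}$ is split injective. Theorem~\ref{thm:tree-a-hilbert} then gives $\kappa(U)=10-6=4$. For disconnected triples, a normal-form computation gives $\kappa(U)=3$.

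The single extra relation also tells the two connected shapes apart. For the star it is $(x-y)(x-z)(y-z)$, which has no $xyz$ term. For the path it has a nonzero $xyz$ term, and the central edge is the unique variable never appearing squared. Both features are unchanged by rescaling and permuting the three lines and by adding pure cubes. This recovers exactly which pairs of edges share a vertex, hence $L(T)$, after which your Whitney endgame applies. Your Step one and the appeal to Whitney are fine. Step two should instead aim to prove that the cube-zero locus is exactly the edge lines, and Step three must be replaced by such a cubic incidence test on triples.
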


\begin{proof}
The implication from right to left follows from functoriality.  For the
converse, halve all cohomological degrees and write
\[
  z_{uv}=h_u+h_v\in A_T^1
  \qquad(uv\in E(T)).
\]
When $n=3$, the diagonal relation on an edge gives
\[
  h_u^2+h_uh_v+h_v^2=0,
  \qquad z_{uv}^2=h_uh_v,
  \qquad z_{uv}^3=0.
\]
Thus the $z_e$ generate $A_T^{(3)}$.  They are linearly independent: after
signing the rows according to the bipartition of $T$, their coefficient matrix
becomes an oriented incidence matrix of the tree, which has column rank
$|E(T)|$.  In particular, $\dim A_T^1=|E(T)|$.
There is only one tree with each of zero, one, or two edges, so those cases are
already determined by the abstract algebra.  We may henceforth assume that
$T$ has at least three edges.

We first recover the projective edge lines from the abstract algebra.  Extend
scalars to $\overline{\Q}$ and set
\[
  Z_T=\bigl\{[z]\in\PP(A_T^1):z^3=0\bigr\}_{\mathrm{red}}.
\]
We claim that
\begin{equation}\label{eq:tree-cube-zero-lines}
  Z_T=\bigl\{[z_e]:e\in E(T)\bigr\}.
\end{equation}

Let $V(T)=V_0\sqcup V_1$ be the bipartition and write
$z=\sum_e c_ez_e$.  For each vertex put
\[
  d_v=\sum_{e\ni v}c_e.
\]
Thus
\[
  z=\sum_{v\in V(T)}d_vh_v.
\]
Given a subset $S\subseteq V_1$, there is a graph homomorphism
$f_S:T\to P_3$ sending $V_0$ to the middle vertex and sending $S$ and its
complement to the two endpoints.  Functoriality gives
\[
  \begin{aligned}
    z&\longmapsto \alpha x+\beta y
      &&\text{in }A_{P_3}^{(3)}=\Q[x,y]/(x^3,y^3),\\
    \alpha&=\sum_{v\in S}d_v,
      &\qquad \beta&=\sum_{v\in V_1\setminus S}d_v.
  \end{aligned}
\]
Here $x^2y$ and $xy^2$ are linearly independent, and hence
\[
  (\alpha x+\beta y)^3=0
  \quad\Longleftrightarrow\quad
  \alpha\beta=0.
\]
Both color sums equal $C=\sum_e c_e$.  Taking $S=\{v\}$ gives
$d_v(C-d_v)=0$ for every $v\in V_1$.  If $C=0$, all these $d_v$ vanish; if
$C\ne0$, every $d_v$ is either $0$ or $C$, and their sum is $C$, so exactly
one is nonzero.  Reversing the color classes gives the same conclusion on
$V_0$.  In the first case the injectivity of the unsigned incidence matrix
gives $c_e=0$ for every $e$.  In the second case there are unique
$p\in V_0$ and $q\in V_1$ with $d_p=d_q=C$, while every other $d_v$ vanishes.
Therefore
\[
  z=C(h_p+h_q).
\]
Root $T$ at $p$.  The Gr\"obner basis from
Proposition~\ref{prop:tree-r-hilbert} has standard monomials with root
exponent at most two and every other exponent at most one.  If $pq$ is not
an edge, its distance from $p$ is at least three.  If $a$ is the parent of
$q$, then $a\ne p$ and
\[
  (h_p+h_q)^3
  =3h_p^2h_q-3h_ph_qh_a-3h_ph_a^2.
\]
The first two terms are standard, while reducing the last term introduces no
factor $h_q$.  Thus the normal form retains $3h_p^2h_q$ and is nonzero.  If
$pq$ is an edge, the cube vanishes by the edge relation.  This proves
\eqref{eq:tree-cube-zero-lines}.

It remains to reconstruct incidence among these recovered edge lines.  For
three distinct points $[x],[y],[z]\in Z_T$, let $U=\langle x,y,z\rangle$ and
define the intrinsic number
\[
  \kappa(U)=\dim\ker\bigl(\operatorname{Sym}^3U\longrightarrow A_T^3\bigr).
\]
The three pure cubes always lie in this kernel.  Reduction by the same rooted
Gr\"obner basis gives the following possibilities; the displayed relation is
the unique additional relation modulo $x^3,y^3,z^3$.
\[
\begin{array}{c|c|c}
\text{selected edges}&\kappa(U)&\text{additional cubic relation}\\ \hline
\text{disconnected}&3&\text{none}\\
K_{1,3}&4&(x-y)(x-z)(y-z)\\
P_4\text{, with central edge }x&4&
  -xy^2-xyz-xz^2+y^2z+yz^2
\end{array}
\]
We justify every entry in the table.  If the three edges form a connected
subtree $S$, then $S$ is either $K_{1,3}$ or $P_4$ and is a graph retract of
$T$.  Explicitly, every component of $T\setminus V(S)$ attaches to a unique
vertex $v$ of $S$.  Choose a neighbor $w_v$ of $v$ in $S$ and map that
component alternately to $v$ and $w_v$ according to distance from $v$ (even
distance to $v$ and odd distance to $w_v$), while fixing $S$ pointwise.  This
defines a loopless graph homomorphism
$T\to S$.  Functoriality therefore makes $A_S^{(3)}\to A_T^{(3)}$ split
injective, with image the subalgebra generated by $x,y,z$.

The coefficient of $s^3$ in Theorem~\ref{thm:tree-a-hilbert} for a four-vertex
tree and $n=3$ is $6$.  Since $\dim\operatorname{Sym}^3U=10$, this gives
$\kappa(U)=4$.  The extra relations in the table can be checked directly.
For the star, write $c$ for the central vertex class and $a,b,d$ for the leaf
classes, so $x=c+a$, $y=c+b$, and $z=c+d$.  Then
\[
  (x-y)(x-z)(y-z)
  =(b-d)\Delta_{ca}+(d-a)\Delta_{cb}+(a-b)\Delta_{cd}=0.
\]
For the path $a-b-c-d$, write the vertex names also for their $h$-classes and
put $y=a+b$, $x=b+c$, and $z=c+d$.  The displayed path relation $F$ satisfies
\[
  F=(d-a)\Delta_{ab}-(a+b+d)\Delta_{bc}
    +(a-c)\Delta_{cd}+a^3=0.
\]
Neither relation lies in the span of the three pure cubes, so the dimension
count shows that it is the unique additional relation.

If the selected edges are disconnected, they are either a matching or an
adjacent pair together with a disjoint edge.  For a matching, the seven mixed
cubic monomials have pairwise disjoint supports in the squarefree standard
monomial basis, using $z_{uv}^2=h_uh_v$.  In the second case, write the
adjacent edges as $vu,vw$, the disjoint edge as $ab$, and root $T$ at $v$.
Put $x=z_{vu}$, $y=z_{vw}$, and $z=z_{ab}$.  All terms below are standard.
Successively inspect
\[
  h_v^2h_u,\quad h_v^2h_w,\quad h_uh_ah_b,\quad h_wh_ah_b,
  \quad h_v^2h_a,\quad h_uh_vh_a,\quad h_wh_vh_a
\]
in $x^2y,xy^2,xz^2,yz^2,xyz,x^2z,y^2z$, respectively.  Eliminating the
coefficients in this order proves that the seven cubics are linearly
independent.  Hence $\kappa(U)=3$ in both disconnected cases.

The two connected cases are intrinsically different.  The star relation has
no $xyz$ term and every variable occurs squared.  The path relation has a
nonzero $xyz$ term, and its central edge $x$ is the unique variable that never
occurs squared.  These properties are unchanged by rescaling or permuting the
three recovered lines.  We can therefore determine exactly which pairs of
edges of $T$ share a vertex, and hence reconstruct the line graph $L(T)$.
For trees with at least three edges every adjacent pair occurs in such a
connected triple; the smaller trees are determined by their number of edges.
Finally, Whitney's line-graph theorem \cite{Whitney} reconstructs $T$ from
$L(T)$.  Its only connected exceptional pair is $K_3$ and $K_{1,3}$, and only
the latter is a tree.  Every step used only the abstract graded multiplication,
so a graded algebra isomorphism $A_T^{(3)}\cong A_{T'}^{(3)}$ yields
$T\cong T'$.  This completes the reconstruction.
\end{proof}

\begin{corollary}[Cubic tree invariant]\label{cor:tree-cubic-invariant}
For a finite tree $T$, set
\[
  V_T=\widetilde A_T^1,
  \qquad
  K_T=\ker\!\left(
    \operatorname{Sym}^3(V_T)\longrightarrow \widetilde A_T^3
  \right).
\]
Then $T\cong T'$ if and only if there is a linear isomorphism
$f:V_T\to V_{T'}$ such that
\[
  \operatorname{Sym}^3(f)(K_T)=K_{T'}.
\]
Thus the cubic relation data $(V_T,K_T)$ is a complete invariant of finite
trees.
\end{corollary}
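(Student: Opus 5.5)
The plan is to observe that the proof of Theorem~\ref{thm:tree-reconstruction} used only the cubic relation data, and to isolate that. Both directions are needed.

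For the forward direction, suppose $T\cong T'$. Theorem~\ref{thm:main-functor} then gives a graded algebra isomorphism $\varphi:A_T^{(3)}\to A_{T'}^{(3)}$. Halving degrees, its degree-one part is a linear isomorphism $f:V_T\to V_{T'}$, and its degree-three part is an isomorphism $\widetilde A_T^3\to\widetilde A_{T'}^3$. Multiplicativity of $\varphi$ makes the square formed by the two multiplication maps $\operatorname{Sym}^3(V)\to\widetilde A^3$, $\operatorname{Sym}^3(f)$, and $\varphi^3$ commute. Hence $\operatorname{Sym}^3(f)(K_T)=K_{T'}$.

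For the converse, I rerun the reconstruction argument, checking at each step that it consults only the pair $(V_T,K_T)$.
\begin{enumerate}
\item Since $\dim V_T=|E(T)|$, the trees with at most two edges are determined, so we may assume at least three edges.
\item The cube-zero locus $Z_T\subseteq\PP(V_T)$ is $\{[z]:z^3\in K_T\}$, because $z^3=0$ in $\widetilde A_T^3$ exactly when the symmetric cube of $z$ lies in the kernel. By \eqref{eq:tree-cube-zero-lines}, this locus is the set of edge lines.
\item For three points of $Z_T$ spanning $U$, the number $\kappa(U)$ equals $\dim(\operatorname{Sym}^3U\cap K_T)$. The extra relation modulo pure cubes is likewise read off from $\operatorname{Sym}^3U\cap K_T$.
\end{enumerate}
The table in the proof of Theorem~\ref{thm:tree-reconstruction} therefore distinguishes disconnected triples, stars, and paths with a marked central edge. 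Adjacency of two edges is then decided as in that proof, which recovers $L(T)$. Whitney's theorem finishes the argument.

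Now let $f:V_T\to V_{T'}$ satisfy $\operatorname{Sym}^3(f)(K_T)=K_{T'}$. Then $\PP(f)$ carries $Z_T$ bijectively onto $Z_{T'}$. It preserves $\kappa$ of every triple and carries the extra relation of each triple to that of the image triple, up to rescaling and permuting the lines. These are exactly the invariances the proof verified for the star/path distinction and for identifying the central edge. So $f$ induces an isomorphism $L(T)\cong L(T')$, and $T\cong T'$ by Whitney's theorem.

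The main obstacle is confirming that no step of the reconstruction secretly used products outside degree three. The key instance is the classification of cube-zero lines: it invoked functorial maps to $A_{P_3}^{(3)}$ and Gröbner normal forms in $A_T^{(3)}$. Those computations only serve to \emph{establish} which $z$ satisfy $z^3=0$ in $A_T^{(3)}$ for the concrete tree $T$. Once that is established, membership of $z^3$ in $K_T$ is all the abstract invariant needs, so no higher-degree data is required. The same reasoning applies to the retract argument behind the value $\kappa(U)=4$.
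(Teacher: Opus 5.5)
Your proposal is correct and follows the paper's own argument. Functoriality gives the forward direction. For the converse, you observe, as the paper does, that the edge lines are exactly the $[z]$ with $z^3\in K_T$, and that $\kappa(U)$ and the distinguishing extra relation are read off from $K_T\cap\operatorname{Sym}^3(U)$; hence $f$ induces $L(T)\cong L(T')$, and Whitney's theorem finishes. You merely spell out more explicitly why $\operatorname{Sym}^3(f)$ transports these data, for instance via $\operatorname{Sym}^3(f)(\operatorname{Sym}^3U\cap K_T)=\operatorname{Sym}^3(f(U))\cap K_{T'}$.
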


\begin{proof}
A tree isomorphism induces the required linear isomorphism by functoriality.
Conversely, the proof of Theorem~\ref{thm:tree-reconstruction} uses only
$V_T$ and $K_T$: the pure cubes in $K_T$ recover the projective edge lines,
and the intersections $K_T\cap\operatorname{Sym}^3(U)$ for their
three-dimensional spans $U$ recover edge incidence.  The resulting line graph
determines $T$.
\end{proof}

\begin{proposition}[Polynomial-size cubic invariant]
\label{prop:tree-cubic-polynomial-size}
Given a tree $T$ on $r$ vertices, the cubic relation data $(V_T,K_T)$ can be
represented using $O(r^6)$ field entries and constructed using polynomially
many arithmetic operations over $\Q$.  For $r\geq2$, this contrasts with
\[
  \dim_{\Q}A_T^{(3)}=3\cdot2^{r-1}-3.
\]
\end{proposition}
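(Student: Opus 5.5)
The plan is to build $(V_T,K_T)$ without ever writing down the exponentially large algebra $A_T^{(3)}$. The key is to compute the cubic kernel inside a polynomial-size quotient.

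First we fix coordinates for $V_T$. By the proof of Theorem~\ref{thm:tree-reconstruction}, the classes $z_e=h_u+h_v$ for $e\in E(T)$ form a basis of $V_T=\widetilde A_T^1$. So $V_T\cong\Q^{r-1}$, and $\operatorname{Sym}^3(V_T)$ has the monomial basis $z_{e}z_{e'}z_{e''}$. There are $\binom{r+1}{3}=O(r^3)$ such monomials. The subspace $K_T$ will be stored as a spanning matrix (or a reduced basis) of size at most $\dim\operatorname{Sym}^3V_T\times\dim\operatorname{Sym}^3V_T=O(r^6)$ entries. This gives the claimed size bound.

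To compute $K_T$, we use \eqref{eq:edge-algebra-in-rg}: $K_T$ is the kernel of $\operatorname{Sym}^3V_T\to (R_T^{(3)})^{6}$, the degree-three part after halving degrees. Root $T$ at $\rho$ and use the Gr\"obner basis of Proposition~\ref{prop:tree-r-hilbert}. Its standard monomials have $h_\rho$-exponent at most $2$ and every other exponent at most $1$. The degree-three standard monomials are therefore squarefree cubics in $r$ variables, together with the monomials $h_\rho^2h_v$. There are $O(r^3)$ of them.

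We then reduce each $z_ez_{e'}z_{e''}$ to normal form. Expanding gives at most $8$ monomials of degree $3$ in the $h_v$. The reduction rules are $h_v^2\mapsto -h_vh_{p(v)}-h_{p(v)}^2$ for $v\ne\rho$, and $h_\rho^3\mapsto0$.

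The step I expect to be the main obstacle is bounding the cost of this reduction, since rewriting can cascade toward the root. I would handle it with an explicit closed form. By induction on depth, show that the normal form of $h_v^2h_w$ and of $h_v^3$ has only $O(r)$ terms. Concretely, $h_v^2$ reduces to a signed combination of $h_vh_{p(v)}$ and terms pushed up the root path, and each step in degree three consumes one variable. Alternatively, and more cheaply, one can cite the standard fact that reduction of a fixed-degree polynomial modulo a Gr\"obner basis never leaves the finite-dimensional space of degree-$3$ polynomials. That space has dimension $O(r^3)$, so each reduction step is a linear operation on vectors of length $O(r^3)$. The number of steps is bounded because the monomial order is a well-order on this finite set, giving at most $O(r^3)$ steps. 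This yields a polynomial bound: $O(r^3)$ cubics, times $O(r^3)$ steps, times $O(r^3)$ arithmetic per step.

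The result is an $O(r^3)\times O(r^3)$ matrix over $\Q$. Gaussian elimination computes its kernel in $O(r^9)$ operations, which is polynomially many, and the kernel basis has $O(r^6)$ entries.

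Finally, the dimension comparison follows from Theorem~\ref{thm:tree-a-hilbert} at $n=3$ and $s=1$. There $[3]_1[2]_1^{r-1}=3\cdot2^{r-1}$, $[3]_1[2]_1=6$, and $\qbinom{3}{2}_1=3$, so
\[
\dim_\Q A_T^{(3)}=3\cdot 2^{r-1}-6+3=3\cdot2^{r-1}-3,
\]
which is exponential in $r$, in contrast with the $O(r^6)$ representation.
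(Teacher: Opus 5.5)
Your proof is correct and is essentially the paper's argument. Edge sums give a basis of $V_T$, the polynomial sector identifies $K_T$ with the kernel of $\operatorname{Sym}^3V_T\to(R_T^{(3)})_3$, and Gaussian elimination on an $O(r^3)\times O(r^3)$ matrix gives the $O(r^6)$ bound. The only variations are that you coordinatize $(R_T^{(3)})_3$ by normal forms for the rooted Gr\"obner basis rather than by row-reducing the spanning relations $h_v^3$ and $h_w\Delta_{uv}$, and that you write out the $s=1$ specialization $3\cdot2^{r-1}-6+3$.

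One small precision: well-ordering alone only gives termination of the reduction, not the $O(r^3)$ step count. To get that bound, specify that you always reduce the largest nonstandard monomial, so that each degree-three monomial is eliminated at most once.
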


\begin{proof}
The case $r=1$ is immediate: both $V_T$ and $K_T$ vanish.  Assume $r\geq2$.
Theorem~\ref{thm:tree-a-hilbert} specializes to
\[
  \operatorname{Hilb}(A_T^{(3)};s)
  =(1+s+s^2)\bigl((1+s)^{r-1}-s\bigr),
\]
so every fixed-degree piece through degree three has polynomial dimension.
We give a direct construction of the multiplication kernel.

Give each $h_v$ degree one and let $P_3$ be the degree-three part of
$\Q[h_v:v\in V(T)]$.  It has dimension $\binom{r+2}{3}$.  By
Proposition~\ref{prop:polynomial-sector}, the degree-three relations in
$R_T^{(3)}$ are spanned by
\[
  h_v^3\quad(v\in V(T)),
  \qquad
  h_w\Delta_{uv}\quad(w\in V(T),\ uv\in E(T)),
\]
where $\Delta_{uv}=h_u^2+h_uh_v+h_v^2$.  Thus the quotient
$(R_T^{(3)})_3$ is obtained by row-reducing a matrix with $O(r^2)$ rows and
$O(r^3)$ columns.

The incidence-matrix argument in the proof of
Theorem~\ref{thm:tree-reconstruction} shows that the edge sums
$z_{uv}=h_u+h_v$ form a basis of $V_T$.  The map
\[
  \operatorname{Sym}^3(V_T)\longrightarrow (R_T^{(3)})_3
\]
sends a product of three basis elements to a polynomial with at most eight
terms.  Its source has dimension $\binom{r+1}{3}$, and its kernel is exactly
$K_T$ because $A_T^{(3)}=\Q[z_{uv}:uv\in E(T)]$ embeds in $R_T^{(3)}$.
Constructing the matrix and computing its kernel therefore uses polynomially
many arithmetic operations.  A row-space basis for $K_T$ has at most
$O(r^6)$ entries.
\end{proof}

\begin{remark}[Recognition versus construction]
\label{rem:tree-cubic-algorithmic-scope}
The proposition constructs $(V_T,K_T)$ from a labeled tree; it does not address
the complexity of recognizing equivalence of two such pairs under
$\operatorname{GL}(V_T)$.
\end{remark}

\begin{remark}\label{rem:tree-reconstruction-dimension}
The proof is genuinely specific to ambient dimension three.  It recovers the
edge generators as the reduced projective cube-zero locus and then reads their
incidence from cubic multiplication.  Neither step follows formally from the
general conductor description.  Whether $A_T^{(n)}$ determines $T$ for every
$n\geq 4$ remains open.
\end{remark}

For $(r,n)=(6,3)$, all six unlabeled trees have the common Hilbert series
\[
  1+5t^2+15t^4+24t^6+25t^8+16t^{10}+6t^{12}+t^{14}.
\]
Their abstract algebras are nevertheless pairwise nonisomorphic.  Indeed, for
the intrinsic power-zero schemes introduced above, the six values of
$\deg Z_5(A_T^{(3)})$ are
\[
  16,\ 18,\ 20,\ 22,\ 24,\ 30.
\]
Since these degrees are all different, they separate the six algebra types.
They provide independent computational certificates for the six algebra
types and illustrate the distinction between shape-blind Hilbert series and
the complete multiplication invariant.

\section{Graph picture spaces}\label{sec:picture-spaces}

Thus far the edge-span algebra has lived on the open graphical configuration
space.  The same span maps place it inside the projective incidence geometry
of graph picture spaces introduced by Martin
\cite{MartinGeometry,MartinTopology}.  For
$d=n-1$, the \emph{picture space} of $G$ is the projective incidence variety
\[
  \begin{aligned}
  \mathcal X^d(G)=\bigl\{((\ell_v),(P_e))\in{}&
    (\PP^d)^{V(G)}\times\Gr(2,n)^{E(G)}:\\
    &\ell_u,\ell_v\subseteq P_{uv}
      \text{ for every }uv\in E(G)\bigr\}.
  \end{aligned}
\]
Adjacent vertices are allowed to coincide in $\mathcal X^d(G)$; when they do,
the plane $P_{uv}$ is additional data rather than the span of two distinct
points.  The \emph{picture variety} $\mathcal V^d(G)$ is the closure in
$\mathcal X^d(G)$ of the locus on which all vertex points are distinct.  The
full picture space can be reducible, and $\mathcal V^d(G)$ is its distinguished
near-generic component.

\begin{proposition}\label{prop:picture-compactification}
The map
\[
  j_G:\Conf_G(\PP^{n-1})\longrightarrow\mathcal X^{n-1}(G),
  \qquad
  (\ell_v)\longmapsto
  ((\ell_v),(\langle\ell_u,\ell_v\rangle)_{uv\in E(G)}),
\]
is an open immersion with image the locus on which the endpoints of every
edge are distinct.  Its image lies in $\mathcal V^{n-1}(G)$ and is dense
there.  In particular,
\[
  \overline{j_G(\Conf_G(\PP^{n-1}))}=\mathcal V^{n-1}(G).
\]
\end{proposition}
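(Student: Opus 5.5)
The plan is to show that the projection $p:\mathcal X^{n-1}(G)\to(\PP^{n-1})^{V(G)}$ restricts to an isomorphism over the open set $\Conf_G(\PP^{n-1})$, with inverse $j_G$. Let $W\subseteq\mathcal X^{n-1}(G)$ be the locus where $\ell_u\neq\ell_v$ for every edge $uv$. It is $p^{-1}(\Conf_G(\PP^{n-1}))$, so it is open because $\Conf_G$ is open in the product of projective spaces.

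On $W$ the incidence conditions $\ell_u,\ell_v\subseteq P_{uv}$ force $P_{uv}=\langle\ell_u,\ell_v\rangle$, since the span is already two-dimensional. Hence $p|_W$ is bijective onto $\Conf_G$, and $j_G$ is a set-theoretic inverse. Each component $\phi_e^G$ of $j_G$ is the regular span map of Section~3, so $j_G$ is a morphism, and $p\circ j_G=\mathrm{id}$. To conclude that $j_G$ is an open immersion onto $W$, it remains to check that $j_G\circ p|_W=\mathrm{id}_W$ scheme-theoretically, not just on points. I would do this locally on $\Gr(2,n)$: in an affine chart the incidence conditions are linear equations in the Plücker (or Stiefel) coordinates of $P_{uv}$. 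When $\ell_u\neq\ell_v$ these equations have full rank and a unique solution, namely the span, so they cut out the graph of $\phi_{uv}$ with its reduced structure. I expect this to be the only delicate point, because one must avoid picking up nonreduced structure. A cleaner route is to note that $W$ is the graph of the morphism $(\phi_e^G)_e:\Conf_G\to\Gr(2,n)^{E}$. Over the locus where $\ell_u\neq\ell_v$, the incidence subvariety $\{(\ell_u,\ell_v,P):\ell_u,\ell_v\subseteq P\}$ is the Grassmannian bundle of two-planes containing a fixed two-plane. That bundle is a point, so the incidence variety is isomorphic to its base, and taking products over the edges gives the claim.

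For the second assertion, the image $W$ contains the locus $W^\circ$ where all vertex points are pairwise distinct, which corresponds to $\Conf_{K_r}\subseteq\Conf_G$. By definition $\mathcal V^{n-1}(G)=\overline{W^\circ}$. Now $\Conf_{K_r}(\PP^{n-1})$ is a nonempty Zariski-open subset of the irreducible variety $\Conf_G(\PP^{n-1})$, which is itself open in $(\PP^{n-1})^{V(G)}$, so it is dense there. Transporting this along the isomorphism $j_G$ shows that $W^\circ$ is dense in $W$. Hence $W\subseteq\overline{W^\circ}=\mathcal V^{n-1}(G)$, and also $\overline W=\overline{W^\circ}$, which gives $\overline{j_G(\Conf_G)}=\mathcal V^{n-1}(G)$. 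Since $W$ is open in $\mathcal X^{n-1}(G)$, it is open in $\mathcal V^{n-1}(G)$, and it is dense there by the equality of closures. Zariski closure and analytic closure agree for constructible sets, so the topological statement follows as well.
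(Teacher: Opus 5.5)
Your proposal is correct and follows essentially the same route as the paper. Both show that the vertex projection is inverse to $j_G$ on the open locus $p^{-1}(\Conf_G(\PP^{n-1}))$. Both then transport the density of the all-distinct locus in the irreducible variety $\Conf_G(\PP^{n-1})$ to conclude that the two closures agree. You also do two things the paper's short proof does not: you check the scheme structure, by noting that the incidence locus over $\ell_u\neq\ell_v$ is a trivial Grassmannian bundle, and you remark that Zariski and classical closures agree. These are harmless extra rigor, since the paper treats $\mathcal X^{n-1}(G)$ as a reduced variety, where a pointwise inverse already suffices.
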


\begin{proof}
On the indicated open locus, the plane belonging to an edge is uniquely the
span of its endpoints, so projection onto the vertex coordinates is inverse
to $j_G$.  The locus is open because the inequalities
$\ell_u\ne\ell_v$ are open conditions.  The all-distinct vertex locus is
dense in the graphical configuration space, which is itself a nonempty open
subset of the irreducible variety $(\PP^{n-1})^{V(G)}$.  Its image is the
generic-picture locus used to define $\mathcal V^{n-1}(G)$, so the two
closures agree.
\end{proof}

For each edge, let
\[
  \rho_e:\mathcal V^{n-1}(G)\longrightarrow\Gr(2,n)
\]
be the projection recording its plane, and put
\[
  \mathcal S_e=\rho_e^*\mathcal S.
\]
Define the \emph{picture edge algebra}
\[
  B_G^{(n)}
  =\Q[c_1(\mathcal S_e^*),c_2(\mathcal S_e^*):e\in E(G)]
  \subseteq H^*(\mathcal V^{n-1}(G);\Q).
\]
Since $\rho_e\circ j_G=\phi_e^G$, restriction gives a canonical surjection
\begin{equation}\label{eq:picture-to-open-edge-algebra}
  j_G^*:B_G^{(n)}\twoheadrightarrow A_G^{(n)}.
\end{equation}
Thus $A_G^{(n)}$ is the edge-Chern algebra of the picture variety after
removing its collision boundary.

\begin{remark}[Relation with Martin's Chern generators]
The individual edge classes above already appear naturally in Martin's
description of orchard picture spaces \cite[Theorem~17 and \S7.2]{MartinTopology}.
For a vertex $v$ and an incident edge $e=vw$, let $\mathcal L_v$ be the
line bundle with fiber $P(v)$ and let $\mathcal K_{e,v}$ have fiber
$P(e)/P(v)$.  The edge-plane bundle fits into
\[
  0\longrightarrow \mathcal L_v\longrightarrow \mathcal S_e
  \longrightarrow \mathcal K_{e,v}\longrightarrow0.
\]
Writing
\[
  x_v=c_1(\mathcal L_v^*),\qquad
  y_{e,v}=c_1(\mathcal K_{e,v}^*),
\]
the Whitney formula gives
\[
  c_1(\mathcal S_e^*)=x_v+y_{e,v},\qquad
  c_2(\mathcal S_e^*)=x_vy_{e,v}.
\]
Martin denotes the symmetric first class by
$z_e=x_v+y_{e,v}=x_w+y_{e,w}$.  On the open graphical configuration locus,
$P(e)/P(v)$ is canonically identified with the other endpoint line $P(w)$;
hence
\[
  (x_v,y_{e,v})\longmapsto(h_v,h_w),
\]
and Martin's two Chern expressions restrict exactly to $a_e$ and $b_e$.
Thus neither the individual classes nor functoriality alone is new: Martin's
picture spaces are likewise contravariantly functorial for
incidence-preserving maps \cite[Remark~8]{MartinTopology}.  Here we study the
edge-generated algebra on the open locus.  Proposition~\ref{prop:picture-tutte-separation}
shows that additive picture-space homology determines neither this algebra nor
its picture analogue even at the level of dimension ($30$ versus $36$), while
Theorem~\ref{thm:tree-reconstruction} shows that multiplication reconstructs a
tree when the Hilbert series does not see its shape.
\end{remark}

The smallest nontrivial example makes this comparison explicit.  Let
$P_3$ have edges $12$ and $23$.  Write $V=\C^n$, and on $\PP(V)$ let
$\mathcal L$ be the tautological line and $\mathcal Q=V/\mathcal L$.  Using
the convention that $\PP(\mathcal Q)$ parametrizes lines in $\mathcal Q$, set
\[
  Y=\PP(\mathcal Q)\times_{\PP(V)}\PP(\mathcal Q).
\]
Let $\mathcal M_1,\mathcal M_3$ be the two tautological lines on $Y$, and let
$\mathcal S_1,\mathcal S_3$ be their inverse images in the trivial bundle
$V\otimes\mathcal O_Y$.  Thus
\[
  0\longrightarrow\mathcal L\longrightarrow\mathcal S_i
  \longrightarrow\mathcal M_i\longrightarrow0,
  \qquad i=1,3.
\]
Choosing the two remaining vertex points gives an iterated projective-bundle
description
\begin{equation}\label{eq:p3-picture-bundle}
  \mathcal X^{n-1}(P_3)
  \cong\PP(\mathcal S_1)\times_Y\PP(\mathcal S_3).
\end{equation}
In particular, this picture space is smooth and irreducible, so it equals its
picture variety, and the pullback from $H^*(Y;\Q)$ is injective.

Put
\[
  h=c_1(\mathcal L^*),\qquad
  \xi_i=c_1(\mathcal M_i^*)\quad(i=1,3),
\]
and write
\[
  \Delta(z,w)=\sum_{k=0}^{n-1}z^{n-1-k}w^k.
\]
The projective-bundle formula gives
\begin{equation}\label{eq:p3-picture-base-ring}
  H^*(Y;\Q)
  \cong
  \frac{\Q[h,\xi_1,\xi_3]}
  {(h^n,\Delta(h,\xi_1),\Delta(h,\xi_3))}.
\end{equation}
The Chern classes of the two edge-plane bundles are
\[
  h+\xi_1,\ h\xi_1,
  \qquad
  h+\xi_3,\ h\xi_3.
\]
On the graphical configuration-space locus, the quotient line
$\mathcal S_i/\mathcal L$ is identified with the corresponding endpoint line,
so restriction sends
\[
  (\xi_1,h,\xi_3)\longmapsto(h_1,h_2,h_3).
\]
The presentation \eqref{eq:p3-picture-base-ring} is precisely
$R_{P_3}^{(n)}$: the omitted relations $\xi_i^n=0$ follow from
$(h-\xi_i)\Delta(h,\xi_i)=h^n-\xi_i^n$.  Proposition~\ref{prop:polynomial-sector}
therefore shows that restriction is injective on $H^*(Y;\Q)$.  We have proved
\begin{equation}\label{eq:p3-picture-edge-isomorphism}
  B_{P_3}^{(n)}\xrightarrow{\ \sim\ }A_{P_3}^{(n)}.
\end{equation}

For one edge, the picture space is
\[
  \mathcal X^{n-1}(K_2)
  =\PP(\mathcal S)\times_{\Gr(2,n)}\PP(\mathcal S).
\]
The projective-bundle theorem makes the pullback of
$H^*(\Gr(2,n);\Q)$ injective, and the Leray--Hirsch argument in the proof of
Theorem~\ref{thm:tree-a-hilbert} does the same after restriction to
$\Conf_{K_2}(\PP^{n-1})$.  Hence
\begin{equation}\label{eq:k2-picture-edge-isomorphism}
  B_{K_2}^{(n)}\xrightarrow{\ \sim\ }A_{K_2}^{(n)}.
\end{equation}

We can now compare the first pair relevant to the Tutte polynomial.  Set
\[
  G_1=P_3\sqcup K_1,
  \qquad
  G_2=K_2\sqcup K_2.
\]
\begin{proposition}\label{prop:picture-tutte-separation}
In every ambient dimension, the picture spaces of $G_1$ and $G_2$ have
isomorphic additive rational homology.  In ambient dimension four, their
picture edge algebras are nonisomorphic; indeed,
\[
  \dim_{\Q}B_{G_1}^{(4)}=30,
  \qquad
  \dim_{\Q}B_{G_2}^{(4)}=36.
\]
Consequently, neither the picture edge algebra nor the open edge-span algebra
is determined by the Tutte polynomial or by the additive homology of the
picture space.
\end{proposition}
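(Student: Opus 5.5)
The plan is to reduce everything to the two building blocks already analysed, $P_3$ and $K_2$, using disjoint-union compatibility of picture spaces. First I would observe, exactly as in Proposition~\ref{prop:disjoint-unions}, that the incidence conditions defining $\mathcal X^{n-1}(G\sqcup H)$ split. This gives $\mathcal X^{n-1}(G\sqcup H)\cong\mathcal X^{n-1}(G)\times\mathcal X^{n-1}(H)$, and likewise for the picture varieties, since the all-distinct locus of the product is dense in the product of the closures. With no edges we get $\mathcal X^{n-1}(K_1)=\PP^{n-1}$. Hence
\[
  \mathcal X^{n-1}(G_1)\cong\mathcal X^{n-1}(P_3)\times\PP^{n-1},
  \qquad
  \mathcal X^{n-1}(G_2)\cong\mathcal X^{n-1}(K_2)\times\mathcal X^{n-1}(K_2).
\]
All factors are smooth and irreducible by \eqref{eq:p3-picture-bundle} and the $K_2$ description, so picture space equals picture variety in both cases.

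For the additive statement, I would compute Poincar\'e polynomials in $s=t^2$ by iterated projective-bundle formulas; all cohomology is even, so the Poincar\'e polynomials determine the rational homology. For $P_3$ the tower $\PP^{n-1}$, two fibres $\PP^{n-2}$ (giving $Y$), then two $\PP^1$-bundles gives $[n]_s[n-1]_s^2[2]_s^2$. Multiplying by $[n]_s$ gives $[n]_s^2[n-1]_s^2[2]_s^2$ for $G_1$. For $K_2$, the Grassmannian contributes $\qbinom{n}{2}_s=[n]_s[n-1]_s/[2]_s$ and the two $\PP^1$-fibres contribute $[2]_s^2$, so the $K_2$ factor is $[n]_s[n-1]_s[2]_s$. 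Squaring gives the same polynomial for $G_2$. This settles the first assertion in every ambient dimension.

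For the algebras, I would first prove a K\"unneth statement for picture edge algebras, $B_{G\sqcup H}^{(n)}\cong B_G^{(n)}\otimes B_H^{(n)}$, by the same argument as Proposition~\ref{prop:disjoint-unions}. Each edge bundle $\mathcal S_e$ is pulled back from the factor containing $e$, and $B_{K_1}^{(n)}=\Q$. Then \eqref{eq:p3-picture-edge-isomorphism} and Theorem~\ref{thm:tree-a-hilbert} with $n=4$, $r=3$ give
\[
  \dim B_{G_1}^{(4)}=\dim A_{P_3}^{(4)}=4\cdot 3^2-\binom42=30.
\]
Similarly, \eqref{eq:k2-picture-edge-isomorphism} and $\dim A_{K_2}^{(4)}=\qbinom42_1=6$ give $\dim B_{G_2}^{(4)}=36$. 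So the picture edge algebras are nonisomorphic. Combining the same tensor decompositions with the two isomorphisms $B\cong A$ gives the same dimensions for $A_{G_1}^{(4)}$ and $A_{G_2}^{(4)}$.

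Finally, $G_1$ and $G_2$ are both forests with two edges, so $T_{G_1}=T_{G_2}=x^2$. Together with the equal additive homology, this yields the stated consequence. The step needing the most care is the product decomposition of the picture variety together with the K\"unneth compatibility for $B$: one must check that closures commute with products and that the edge Chern classes really are pulled back from single factors. Both points follow from the smooth irreducible bundle descriptions above.
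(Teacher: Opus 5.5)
Your proof is correct. For the algebra half it follows the paper exactly: the product splitting of picture varieties under disjoint union, a K\"unneth argument for $B$, the identifications \eqref{eq:p3-picture-edge-isomorphism} and \eqref{eq:k2-picture-edge-isomorphism}, and Theorem~\ref{thm:tree-a-hilbert} to obtain $30$ and $36$.

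The genuine difference is in the additive assertion.

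The paper computes nothing there. For $n\geq3$ it invokes Martin's theorem that the homology of a picture space is determined by the Tutte polynomial, and notes $T_{G_1}=T_{G_2}=x^2$. It then treats $n=2$ separately, observing that both picture spaces are $(\PP^1)^4$.

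You instead use the explicit iterated projective-bundle towers. Via $\qbinom{n}{2}_s=[n]_s[n-1]_s/[2]_s$, both Poincar\'e polynomials come out as $[n]_s^2[n-1]_s^2[2]_s^2$.

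What each route buys:
\begin{itemize}
\item Your route is self-contained and uniform in all $n\geq2$, so $n=2$ needs no special handling.
\item Because the projective-bundle formula holds integrally, your computation even gives isomorphic integral homology.
\item However, your argument works only because both graphs are forests whose picture spaces have explicit bundle descriptions.
\item The paper's citation places the example inside a general principle. Since picture-space homology is a Tutte invariant, the example shows that the edge Chern classes carry information invisible to any such Tutte-determined additive invariant.
\end{itemize}

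You also spell out why taking closures commutes with the product decomposition: the all-distinct locus of $G\sqcup H$ is dense in the product of the two all-distinct loci. The paper only asserts this.
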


\begin{proof}
Both graphs have four vertices and
\[
  P_{G_1}(q)=P_{G_2}(q)=q^2(q-1)^2,
  \qquad
  T_{G_1}(x,y)=T_{G_2}(x,y)=x^2.
\]
For $n\geq3$, Martin's formula \cite[Theorem~1]{MartinTopology} therefore gives
isomorphic additive homology groups for their picture spaces in every fixed
ambient dimension.  When $n=2$, every edge-plane is the whole of $\C^2$, so
both picture spaces are simply $(\PP^1)^4$ and the same conclusion holds.
Picture varieties split as products under disjoint unions, and their edge
bundles pull back from the corresponding factors; the K\"unneth argument of
Proposition~\ref{prop:disjoint-unions} therefore shows that their picture edge
algebras tensor, while isolated vertices contribute no edge generators.
Equations
\eqref{eq:p3-picture-edge-isomorphism} and
\eqref{eq:k2-picture-edge-isomorphism} identify the picture edge algebras
with the corresponding open edge-span algebras.  For $n=4$, writing $s=t^2$,
Theorem~\ref{thm:tree-a-hilbert} gives
\begin{align*}
  \operatorname{Hilb}(B_{G_1}^{(4)};s)
  &=1+2s+5s^2+6s^3+7s^4+5s^5+3s^6+s^7,\\
  \operatorname{Hilb}(B_{G_2}^{(4)};s)
  &=\left(\qbinom{4}{2}_s\right)^2\\
  &=1+2s+5s^2+6s^3+8s^4+6s^5+5s^6+2s^7+s^8.
\end{align*}
Their dimensions are $30$ and $36$, respectively.  Consequently, neither
$B_G^{(4)}$ nor $A_G^{(4)}$ is determined by the Tutte polynomial or by the
additive homology of the picture space.  This does not assert that the
undecorated cohomology rings of the two picture spaces are different; the
additional information is carried by the distinguished edge Chern classes
and their multiplication.
\end{proof}

\section{Comparison with related constructions}
\label{sec:related-constructions}

The preceding section places the functor in its projective-geometric setting.
We now compare it with other algebraic and cohomological constructions attached
to graphs.  The classical starting point is the graphic hyperplane arrangement.
If
$X=\C$, then
\[
  \Conf_G(\C)
  =\C^{V(G)}\setminus\bigcup_{uv\in E(G)}\{z_u=z_v\}
\]
is the complement of the graphic arrangement.  Its cohomology is the
Orlik--Solomon algebra of the graphic matroid
\cite{OrlikSolomon}.  The projective construction considered here replaces
hyperplane-arrangement classes by diagonal-complement cohomology and adds the
geometric span maps to Grassmannians.

Eastwood and Huggett used these spaces to express chromatic-polynomial values
as Euler characteristics \cite{EastwoodHuggett}.
Their homology was subsequently approached through graph cohomology and
graph-dependent K\v{r}{\'\i}\v{z}--Totaro models
\cite{BaranovskySazdanovic,BokstedtMinuz}.  Thus both the spaces and their
cohomological models are established.

\subsection{Recent graphical-configuration constructions}

Khoroshkin and Lyskov study the same underlying spaces $\Conf_G(X)$, including
$X=\PP^{n-1}(\C)$, through twisted Lie algebras and contractad operations on
their compactifications \cite[Theorem~2.7]{KhoroshkinLyskov}.  Their first
Chern classes come from vertex- and tube-indexed line bundles on wonderful
compactifications of $\Conf_G(\C)/\operatorname{Aff}(\C)$
\cite[Section~5.1]{KhoroshkinLyskov}.  Our classes are instead $c_1$ and $c_2$
of rank-two bundles pulled back along edge-span maps on the open projective
space, with ambient dimension retained.  For example, $A_G^{(2)}=\Q$ because
$\Gr(2,2)$ is a point, so these classes cannot agree with their $\psi$-classes.

Crowley, Dorpalen-Barry, Henriques, and Proudfoot consider another graphical
configuration space
\[
  X_{\mathrm{CDHP}}(G)
  =\Conf_G(SU(2))/SU(2)^{\pi_0(G)},
\]
where each copy of $SU(2)$ acts by left translation on one connected
component.  They identify its integral cohomology with an internal zonotopal
algebra of the cographical vector arrangement by a canonical degree-halving
isomorphism \cite[Theorem~1.9]{CrowleyEtAl}.  The following elementary
comparison shows that this algebra and the edge-span algebra retain different
graph information.

\begin{proposition}\label{prop:crowley-tree-comparison}
For every finite tree $T$,
\[
  H^*(X_{\mathrm{CDHP}}(T);\Q)
  \cong \operatorname{IZ}(T;\Q)
  \cong \Q.
\]
By contrast, $A_T^{(3)}$ determines $T$ up to graph isomorphism.  Hence the two
algebra-valued constructions do not agree, even after restriction to trees.
\end{proposition}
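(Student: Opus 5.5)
The plan is to compute $H^*(X_{\mathrm{CDHP}}(T);\Q)$ directly from the geometry, and then cite Theorem~\ref{thm:tree-reconstruction} for the contrast. The CDHP identification \cite[Theorem~1.9]{CrowleyEtAl} with $\operatorname{IZ}(T;\Q)$ will be used as a black box. The only real work is showing that both sides are $\Q$ concentrated in degree zero.

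\emph{Geometric side.} A tree is connected, so $\pi_0(T)$ is a point and $X_{\mathrm{CDHP}}(T)=\Conf_T(SU(2))/SU(2)$, with $SU(2)$ acting diagonally by left translation. This action is free, so the quotient is a slice. Root $T$ at a vertex $\rho$ and use the action to normalize $x_\rho=1$. Then we will show
\[
  X_{\mathrm{CDHP}}(T)\cong\{(x_v)_{v\ne\rho}: x_u\ne x_v\text{ for }uv\in E(T)\}.
\]
We fiber this space by deleting leaves one at a time. Let $v$ be a leaf with parent $p(v)$. Forgetting $x_v$ is a fibre bundle whose fibre is $SU(2)\setminus\{x_{p(v)}\}\cong S^3\setminus\{\mathrm{pt}\}\cong\mathbb R^3$, which is contractible. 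It is trivialized, for instance, by $x_v=x_{p(v)}y$ with $y\in SU(2)\setminus\{1\}$. Hence forgetting a leaf is a homotopy equivalence. By induction on $|V(T)|$, the space is homotopy equivalent to the one-vertex case, which is a point. Therefore $H^*(X_{\mathrm{CDHP}}(T);\Q)\cong\Q$. Combined with the CDHP isomorphism, this gives $\operatorname{IZ}(T;\Q)\cong\Q$.

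\emph{Algebraic cross-check (optional).} As a consistency check, I would also note that the cographical arrangement of a tree is the arrangement of the cographic matroid of $T$. Every edge of a tree is a bridge, so this matroid consists of loops only and has rank zero. Hence its internal zonotopal algebra is $\Q$. The paper's argument may rely on just one of these two routes; the topological route is self-contained given the cited theorem.

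\emph{Contrast.} By Theorem~\ref{thm:tree-reconstruction}, $A_T^{(3)}$ determines $T$. Take, say, the path and the star on four vertices. Their CDHP algebras are both $\Q$, but their edge-span algebras are nonisomorphic. Hence no assignment on trees can match the two constructions, not even abstractly.

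The only step needing care is the slice and bundle argument for the free $SU(2)$ action, specifically the local triviality of the leaf-forgetting map. The explicit trivialization above handles it.
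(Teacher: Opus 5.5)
Your proposal is correct and takes essentially the paper's approach. The paper also normalizes the root coordinate to $1$ and uses exactly your edge coordinates $x_{p(v)}^{-1}x_v$; it simply assembles them in one step into a global diffeomorphism $X_{\mathrm{CDHP}}(T)\cong(SU(2)\setminus\{1\})^{E(T)}$ rather than peeling off leaves inductively. It likewise notes that the cographical arrangement lives in $H^1(T;\mathbb{Z})=0$, and then cites Theorem~\ref{thm:tree-reconstruction} for the contrast.
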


\begin{proof}
Choose a root of $T$ and orient every edge away from it.  Left translation
gives each point of $X_{\mathrm{CDHP}}(T)$ a unique representative whose root
coordinate is the identity.  Sending such a representative $(x_v)$ to
\[
  (x_u^{-1}x_v)_{u\to v\in E(T)}
\]
defines a diffeomorphism
\[
  X_{\mathrm{CDHP}}(T)
  \cong (SU(2)\setminus\{1\})^{E(T)}.
\]
Indeed, the unique paths from the root reconstruct all vertex coordinates
from the edge coordinates.  Since $SU(2)\cong S^3$, each factor is
contractible.  Equivalently, the cographical arrangement takes values in
$H^1(T;\mathbb{Z})=0$, so its internal zonotopal algebra is $\Q$.  The final
assertion is Theorem~\ref{thm:tree-reconstruction}.
\end{proof}

Subalgebras generated by characteristic classes also have substantial
precedent.  On homogeneous spaces, Postnikov, B. Shapiro, and M. Shapiro
studied algebras generated by curvature forms \cite{PostnikovShapiroShapiro};
related zonotopal algebras are controlled by the graphic matroid
\cite{NenashevZonotopal}.
Filtered Postnikov--Shapiro and external bizonotopal algebras can even be
complete graph invariants
\cite{NenashevShapiro,KirillovNenashevShapiroVaintrob}.

The degree-four part has an additional elementary precedent.  The classical
edge algebra of $G$ is the monomial subalgebra
\[
  \Bbbk[x_ux_v:uv\in E(G)]\subseteq\Bbbk[x_v:v\in V(G)]
\]
\cite{Katzman}.  The subalgebra generated only by the classes $b_{uv}=h_uh_v$
is an image of this edge algebra in the polynomial sector of
$H_G^{(n)}$.  The classes $a_{uv}=h_u+h_v$, the truncation and diagonal
relations, and the simultaneous dependence of the ambient cohomology ring on
$G$ distinguish $A_G^{(n)}$ from the classical edge ring and from the
matroidal zonotopal algebras.

For comparison, the Stanley--Reisner ring of the independence complex,
\[
  \operatorname{SR}(G)
  =
  \Bbbk[x_v:v\in V(G)]/(x_ux_v:uv\in E(G)),
\]
is a covariant graph functor under $x_v\mapsto x_{f(v)}$, and its abstract
algebra type determines $G$ \cite{BrunsGubeladze}.  Here adjacency is imposed
directly by the defining monomial ideal.  By contrast, $A_G^{(n)}$ is
extracted from the cohomology of a geometric configuration space, and its
relations arise from cup products of edge-span Chern classes.

Other graph-indexed constructions include graph cohomology categorifying the
chromatic polynomial \cite{HelmeGuizonRong} and Hom complexes used for
coloring obstructions \cite{BabsonKozlov}.

\section{Further questions}

The construction leads to a focused collection of questions.
\begin{enumerate}
  \item How does $A_G^{(n)}$ vary or stabilize with the ambient dimension
  $n$, and what information is lost in passing from $H_G^{(n)}$ to
  $A_G^{(n)}$?
  \item Can the deletion--contraction cospan of
  Section~\ref{subsec:deletion-contraction} be enhanced to an exact or derived
  relationship among $A_{G\setminus e}^{(n)}$, $A_G^{(n)}$, and
  $A_{G/e}^{(n)}$, or yield recurrences for their algebraic invariants?  How
  does $\mathcal A_n$ interact with graph products?
  \item Can the conductor description in
  Corollary~\ref{cor:bipartite-preimage} be used to describe the
  multiplication or presentations of $A_T^{(n)}$ for general $n$?  In
  particular, does $A_T^{(n)}$ determine $T$ for every $n\geq 4$, as it does
  for $n=3$?  Which broader graph classes can be reconstructed from
  $A_G^{(3)}$?
  \item How far can the graphical model and the intrinsic presentation
  \eqref{eq:canonical-presentation} be scaled beyond five vertices without
  computing every multiplication table explicitly?
  \item What does the kernel of
  $B_G^{(n)}\twoheadrightarrow A_G^{(n)}$ record about the collision boundary
  of the picture variety?
\end{enumerate}

\section*{Acknowledgments}

This disclosure follows the recommendations of the Leiden Declaration on
Artificial Intelligence and Mathematics \cite{LeidenDeclaration}. OpenAI's
Codex was used extensively to develop the construction and arguments, draft and
revise the manuscript, conduct preliminary literature searches, and implement
and test the software. Anthropic's Claude was used for discussion and editorial
feedback. The author verified the final arguments, references, and computational
claims and assumes full responsibility for the contents.

\bibliographystyle{amsplain}
\bibliography{edge_span_chern_algebras_of_graphical_configuration_spaces}

\providecommand{\bysame}{\leavevmode\hbox to3em{\hrulefill}\thinspace}
\providecommand{\MR}{\relax\ifhmode\unskip\space\fi MR }
% \MRhref is called by the amsart/book/proc definition of \MR.
\providecommand{\MRhref}[2]{%
  \href{http://www.ams.org/mathscinet-getitem?mr=#1}{#2}
}
\providecommand{\href}[2]{#2}
\begin{thebibliography}{10}

\bibitem{AlisteDeMierZamora}
Jos{\'e} Aliste-Prieto, Anna de~Mier, and Jos{\'e} Zamora, \emph{On trees with
  the same restricted {$U$}-polynomial and the {Prouhet--Tarry--Escott}
  problem}, Discrete Math. \textbf{340} (2017), no.~6, 1435--1441,
  \href{https://doi.org/10.1016/j.disc.2016.09.019}{doi:10.1016/j.disc.2016.09.019}.

\bibitem{LeidenDeclaration}
Jarod Alper et~al., \emph{Leiden declaration on artificial intelligence and
  mathematics}, Zenodo, June 2026, Declaration,
  \href{https://doi.org/10.5281/zenodo.20302944}{doi:10.5281/zenodo.20302944}.

\bibitem{BabsonKozlov}
Eric Babson and Dmitry~N. Kozlov, \emph{Complexes of graph homomorphisms},
  Israel J. Math. \textbf{152} (2006), no.~1, 285--312,
  \href{https://doi.org/10.1007/BF02771988}{doi:10.1007/BF02771988}.

\bibitem{BaranovskySazdanovic}
Vladimir Baranovsky and Radmila Sazdanovi{\'c}, \emph{Graph homology and graph
  configuration spaces}, J. Homotopy Relat. Struct. \textbf{7} (2012), no.~2,
  223--235,
  \href{https://doi.org/10.1007/s40062-012-0006-3}{doi:10.1007/s40062-012-0006-3}.

\bibitem{BerceanuMarklPapadima}
Barbu Berceanu, Martin Markl, and {\c S}tefan Papadima, \emph{Multiplicative
  models for configuration spaces of algebraic varieties}, Topology \textbf{44}
  (2005), no.~2, 415--440,
  \href{https://doi.org/10.1016/j.top.2004.10.002}{doi:10.1016/j.top.2004.10.002}.

\bibitem{BokstedtMinuz}
Marcel B{\"o}kstedt and Erica Minuz, \emph{Graph cohomologies and rational
  homotopy type of configuration spaces},
  \href{https://arxiv.org/abs/1904.01452v3}{arXiv:1904.01452v3}, 2019, revised
  2020.

\bibitem{BrunsGubeladze}
Winfried Bruns and Joseph Gubeladze, \emph{Combinatorial invariance of
  {Stanley--Reisner} rings}, Georgian Math. J. \textbf{3} (1996), no.~4,
  315--318,
  \href{https://doi.org/10.1515/GMJ.1996.315}{doi:10.1515/GMJ.1996.315}.

\bibitem{ChaudharyGordon}
Sharad Chaudhary and Gary Gordon, \emph{Tutte polynomials for trees}, J. Graph
  Theory \textbf{15} (1991), no.~3, 317--331,
  \href{https://doi.org/10.1002/jgt.3190150308}{doi:10.1002/jgt.3190150308}.

\bibitem{CrowleyEtAl}
Colin Crowley, Galen Dorpalen-Barry, Andr{\'e} Henriques, and Nicholas
  Proudfoot, \emph{The geometry of zonotopal algebras {I}: cohomology of
  graphical configuration spaces},
  \href{https://arxiv.org/abs/2502.12768v2}{arXiv:2502.12768v2}, 2025.

\bibitem{EastwoodHuggett}
Michael Eastwood and Stephen Huggett, \emph{Euler characteristics and chromatic
  polynomials}, European J. Combin. \textbf{28} (2007), no.~6, 1553--1560,
  \href{https://doi.org/10.1016/j.ejc.2006.09.005}{doi:10.1016/j.ejc.2006.09.005}.

\bibitem{HelmeGuizonRong}
Laure Helme-Guizon and Yongwu Rong, \emph{A categorification for the chromatic
  polynomial}, Algebr. Geom. Topol. \textbf{5} (2005), no.~4, 1365--1388,
  \href{https://doi.org/10.2140/agt.2005.5.1365}{doi:10.2140/agt.2005.5.1365}.

\bibitem{HoekstraMendoza}
Teresa~I. Hoekstra-Mendoza, \emph{Recovering trees from the cohomology ring of
  their configuration spaces},
  \href{https://arxiv.org/abs/2312.16646v1}{arXiv:2312.16646v1}, 2023.

\bibitem{Katzman}
Mordechai Katzman, \emph{Bipartite graphs whose edge algebras are complete
  intersections}, J. Algebra \textbf{220} (1999), no.~2, 519--530,
  \href{https://doi.org/10.1006/jabr.1999.7919}{doi:10.1006/jabr.1999.7919}.

\bibitem{KhoroshkinLyskov}
Anton Khoroshkin and Denis Lyskov, \emph{Graphical configuration spaces,
  contractads and formality},
  \href{https://arxiv.org/abs/2509.21255v4}{arXiv:2509.21255v4}, 2026.

\bibitem{KirillovNenashevShapiroVaintrob}
Anatol Kirillov, Gleb Nenashev, Boris Shapiro, and Arkady Vaintrob,
  \emph{Bizonotopal graphical algebras}, Algebraic Combinatorics \textbf{9}
  (2026), no.~3, 831--863,
  \href{https://doi.org/10.5802/alco.488}{doi:10.5802/alco.488}.

\bibitem{Kriz}
Igor K{\v r}{\'\i}{\v z}, \emph{On the rational homotopy type of configuration
  spaces}, Ann. of Math. (2) \textbf{139} (1994), no.~2, 227--237,
  \href{https://doi.org/10.2307/2946581}{doi:10.2307/2946581}.

\bibitem{LiuTreePolynomial}
Pengyu Liu, \emph{A tree distinguishing polynomial}, Discrete Appl. Math.
  \textbf{288} (2021), 1--8,
  \href{https://doi.org/10.1016/j.dam.2020.08.019}{doi:10.1016/j.dam.2020.08.019}.

\bibitem{MartinGeometry}
Jeremy~L. Martin, \emph{Geometry of graph varieties}, Trans. Amer. Math. Soc.
  \textbf{355} (2003), no.~10, 4151--4169,
  \href{https://doi.org/10.1090/S0002-9947-03-03321-X}{doi:10.1090/S0002-9947-03-03321-X}.

\bibitem{MartinTopology}
\bysame, \emph{On the topology of graph picture spaces}, Adv. Math.
  \textbf{191} (2005), no.~2, 312--338,
  \href{https://doi.org/10.1016/j.aim.2004.03.010}{doi:10.1016/j.aim.2004.03.010}.

\bibitem{NenashevZonotopal}
Gleb Nenashev, \emph{Classification of external zonotopal algebras}, Electron.
  J. Combin. \textbf{26} (2019), no.~1, Paper No. P1.32, 10 pp.,
  \href{https://doi.org/10.37236/8299}{doi:10.37236/8299}.

\bibitem{NenashevShapiro}
Gleb~V. Nenashev and Boris Shapiro, \emph{``{K}-theoretic'' analog of
  {Postnikov--Shapiro} algebra distinguishes graphs}, J. Combin. Theory Ser. A
  \textbf{148} (2017), 316--332,
  \href{https://doi.org/10.1016/j.jcta.2017.01.001}{doi:10.1016/j.jcta.2017.01.001}.

\bibitem{NishimuraKneser}
Yusaku Nishimura, \emph{The {Kneser} chromatic function distinguishes trees},
  Discrete Math. \textbf{349} (2026), no.~6, Paper No. 115009,
  \href{https://doi.org/10.1016/j.disc.2026.115009}{doi:10.1016/j.disc.2026.115009}.

\bibitem{NobleWelsh}
Steven~D. Noble and Dominic J.~A. Welsh, \emph{A weighted graph polynomial from
  chromatic invariants of knots}, Ann. Inst. Fourier (Grenoble) \textbf{49}
  (1999), no.~3, 1057--1087,
  \href{https://doi.org/10.5802/aif.1706}{doi:10.5802/aif.1706}.

\bibitem{OrlikSolomon}
Peter Orlik and Louis Solomon, \emph{Combinatorics and topology of complements
  of hyperplanes}, Invent. Math. \textbf{56} (1980), no.~2, 167--189,
  \href{https://doi.org/10.1007/BF01392549}{doi:10.1007/BF01392549}.

\bibitem{PostnikovShapiroShapiro}
Alexander Postnikov, Boris Shapiro, and Mikhail Shapiro, \emph{Algebras of
  curvature forms on homogeneous manifolds}, Differential topology,
  infinite-dimensional {Lie} algebras, and applications (Alexander Astashkevich
  and Serge Tabachnikov, eds.), Amer. Math. Soc. Transl. Ser. 2, vol. 194,
  Amer. Math. Soc., Providence, RI, 1999,
  \href{https://doi.org/10.1090/trans2/194/10}{doi:10.1090/trans2/194/10},
  pp.~227--235.

\bibitem{StanleyChromaticSymmetric}
Richard~P. Stanley, \emph{A symmetric function generalization of the chromatic
  polynomial of a graph}, Adv. Math. \textbf{111} (1995), no.~1, 166--194,
  \href{https://doi.org/10.1006/aima.1995.1020}{doi:10.1006/aima.1995.1020}.

\bibitem{Totaro}
Burt Totaro, \emph{Configuration spaces of algebraic varieties}, Topology
  \textbf{35} (1996), no.~4, 1057--1067,
  \href{https://doi.org/10.1016/0040-9383(95)00058-5}{doi:10.1016/0040-9383(95)00058-5}.

\bibitem{WaltersGraphFunctorSoftware}
Jackson Walters, \emph{{jacksonwalters/graph-functor: Graph Functor v1.0.3:
  Sage-Free Reproducibility Fixes}}, Zenodo, 2026, Software release~v1.0.3,
  \href{https://doi.org/10.5281/zenodo.21538969}{doi:10.5281/zenodo.21538969}.

\bibitem{Whitney}
Hassler Whitney, \emph{Congruent graphs and the connectivity of graphs}, Amer.
  J. Math. \textbf{54} (1932), no.~1, 150--168,
  \href{https://doi.org/10.2307/2371086}{doi:10.2307/2371086}.

\bibitem{ZakharovThesis}
Alexander Zakharov, \emph{Rational homotopy type of complements of smooth
  arrangements}, Ph.D. thesis, Sorbonne Universit{\'e}, 2025, Thesis
  no.~2025SORUS160,
  \href{https://doi.org/10.70675/e899d468z522ez4e32za5a1z2359ce488423}{doi:10.70675/e899d468z522ez4e32za5a1z2359ce488423}.

\bibitem{Zakharov}
\bysame, \emph{Rational homotopy type of complements of submanifold
  arrangements}, \href{https://arxiv.org/abs/2211.05033v3}{arXiv:2211.05033v3},
  2026.

\end{thebibliography}

\end{document}